\documentclass[11pt]{amsart}
\usepackage{amsmath}
\usepackage[active]{srcltx}
\usepackage{t1enc}
\usepackage[latin2]{inputenc}
\usepackage{verbatim}
\usepackage{amsmath,amsfonts,amssymb,amsthm}
\usepackage[mathcal]{eucal}
\usepackage{enumerate}
\usepackage[centertags]{amsmath}
\usepackage{graphics}
\usepackage[active]{srcltx}

\textheight7.6in
\textwidth5.1in

\setcounter{MaxMatrixCols}{10}

\newtheorem{theorem}{Theorem}[section]

\newtheorem{definition}{Definition}[section]
\newtheorem{corollary}{Corollary}[section]

\numberwithin{equation}{section}

\begin{document}
\title[On the Convergence and Summability of Fourier series ]{On the
Convergence and Summability of double Walsh-Fourier series of functions of
bounded generalized variation}
\author{Ushangi Goginava and Artur Sahakian}
\address{U. Goginava, Department of Mathematics, Faculty of Exact and
Natural Sciences, Ivane Javakhishvili Tbilisi State University, Chavcha\-vadze str. 1, Tbilisi
0128, Georgia}
\email{zazagoginava@gmail.com}
\address{A. Sahakian, Yerevan State University, Faculty of Mathematics and
Mechanics, Alex Manoukian str. 1, Yerevan 0025, Armenia}
\email{sart@ysu.am}
\date{}
\maketitle

\begin{abstract}
The convergence of partial sums and Ces\'aro means of negative order of
double Walsh-Fourier series of functions of bounded \ generalized variation
is investigated.
\end{abstract}

\footnotetext{%
2010 Mathematics Subject Classification 42C10 .
\par
Key words and phrases: Walsh function, Bounded variation, Ces\'aro means.

\par
The research of U. Goginava was supported by Shota Rustaveli National
Science Foundation grant no.31/48 (Operators in some function spaces and their applications in
Fourier analysis)
}

\section{ Classes of Functions of Bounded Generalized
Variation}

In 1881 Jordan \cite{Jo} introduced a class of functions of bounded
variation and applied it to the theory of Fourier series. Hereinafter this
notion was generalized by many authors (quadratic variation, $\Phi $%
-variation, $\Lambda $-variation ets., see \cite{Ch, M, Wi,Wa1}). In two
dimensional case the class BV of functions of bounded variation was
introduced by Hardy \cite{Ha}.

Let $f$ be a real and measurable function of two variables on the unit square. Given intervals $\Delta =(a,b)$, $J=(c,d)$ and points $x,y$ from $I:=[0,1)$ we denote
\begin{equation*}
f(\Delta ,y):=f(b,y)-f(a,y),\qquad f(x,J)=f(x,d)-f(x,c)
\end{equation*}%
and
\begin{equation*}
f(\Delta ,J):=f(a,c)-f(a,d)-f(b,c)+f(b,d).
\end{equation*}%
Let $E=\{\Delta _{i}\}$ be a collection of nonoverlapping intervals from $I$
ordered in arbitrary way and let $\Omega $ be the set of all such
collections $E$. Denote by $\Omega _{n}$ the set of all collections of $n$
nonoverlapping intervals $I_{k}\subset I$.

For the sequences of positive numbers
$$\Lambda ^{1}=\{\lambda
_{n}^{1}\}_{n=1}^{\infty },\qquad \Lambda ^{2}=\{\lambda
_{n}^{2}\}_{n=1}^{\infty }
$$
and $I^{2}:=[0,1)^{2}$ we denote
\begin{equation*}
\Lambda ^{1}V_{1}(f;I^{2})=\sup_{y}\sup_{E\in \Omega }\sum_{i}\frac{%
|f(\Delta _{i},y)|}{\lambda _{i}^{1}}\,\,\,\,\,\,\left( E=\{\Delta
_{i}\}\right) ,
\end{equation*}%
\begin{equation*}
\Lambda ^{2}V_{2}(f;I^{2})=\sup_{x}\sup_{F\in \Omega }\sum_{j}\frac{%
|f(x,J_{j})|}{\lambda _{j}^{2}}\qquad (F=\{J_{j}\}),
\end{equation*}%
\begin{equation*}
\left( \Lambda ^{1}\Lambda ^{2}\right) V_{1,2}(f;I^{2})=\sup_{F,\,E\in
\Omega }\sum_{i}\sum_{j}\frac{|f(\Delta _{i},J_{j})|}{\lambda
_{i}^{1}\lambda _{j}^{2}}.
\end{equation*}

\begin{definition}
We say that the function $f$ has Bounded $\left( \Lambda ^{1},\Lambda
^{2}\right) $-variation on $I^{2}$ and write $f\in \left( \Lambda
^{1},\Lambda ^{2}\right) BV\left( I^{2}\right) $, if
\begin{equation*}
\left( \Lambda ^{1},\Lambda ^{2}\right) V(f;I^{2}):=\Lambda
^{1}V_{1}(f;I^{2})+\Lambda ^{2}V_{2}(f;I^{2})+\left( \Lambda ^{1}\Lambda
^{2}\right) V_{1,2}(f;I^{2})<\infty .
\end{equation*}%
If $\Lambda^1=\Lambda^2=\Lambda$, we say $\Lambda$-variation and use the notation $\Lambda BV(I^2)$.

We say that the function $f$ has Bounded Partial $\Lambda $-variation and
write $f\in P\Lambda BV\left( I^{2}\right) $ if
\begin{equation*}
P\Lambda BV(f;I^{2}):=\Lambda V_{1}(f;I^{2})+\Lambda V_{2}(f;I^{2})<\infty .
\end{equation*}
\end{definition}

If $\Lambda=\{\lambda_n\}$ with $\lambda _{n}\equiv 1$ (or if $0<c<\lambda _{n}<C<\infty ,\ n=1,2,\ldots $%
) the classes $\Lambda BV$ and $P\Lambda BV$ coincide, respectively, with the Hardy class $BV$ and with the class $PBV$ functions  of bounded partial variation introduced by Goginava \cite{GoEJA}. Hence it is reasonable to
assume that $\lambda _{n}\rightarrow \infty $ and since the intervals in $%
E=\{\Delta _{i}\}$ are ordered arbitrarily, we will suppose, without loss of
generality, that the sequence $\{\lambda _{n}\}$ is increasing. Thus, we assume that
\begin{equation}
1<\lambda _{1}\leq \lambda _{2}\leq \ldots ,\qquad \lim_{n\rightarrow \infty
}\lambda _{n}=\infty, \qquad \sum\limits_{n=1}^{\infty }\left( 1/\lambda
_{n}\right) =+\infty .  \label{Lambda}
\end{equation}%

In the case when $\lambda _{n}=n,\ n=1,2\ldots $ we say \textit{Harmonic
Variation} instead of $\Lambda $-variation and write $H$ instead of $\Lambda$, i.e.
$HBV$, $PHBV$, $HV(f)$, ets.

The notion of $\Lambda $-variation was introduced by Waterman \cite{Wa1} in
one dimensional case, by Sahakian \cite{Saha} in two dimensional case. The
notion of bounded partial $\Lambda $-variation ($P\Lambda BV$) was
introduced by Goginava and Sahakian \cite{GogSah}.

Dyachenko and Waterman \cite{DW} introduced another class of functions of
generalized bounded variation. Denoting by $\Gamma $ the  set of finite
collections of nonoverlapping rectangles $A_{k}:=\left[ \alpha _{k},\beta
_{k}\right] \times \left[ \gamma _{k},\delta _{k}\right] \subset I^{2}$, we define
\begin{equation*}
\Lambda ^{\ast }V\left( f\right) :=\sup_{\{A_{k}\}\in \Gamma }\sum\limits_{k}%
\frac{\left\vert f\left( A_{k}\right) \right\vert }{\lambda _{k}}.
\end{equation*}

\begin{definition}[Dyachenko, Waterman]
Let $f$ be a real function on $I^{2}$. We say that $f\in \Lambda
^{\ast }BV$, if
\begin{equation*}
\Lambda V(f):=\Lambda V_{1}(f)+\Lambda V_{2}(f)+\Lambda ^{\ast }V\left(
f\right) <\infty .
\end{equation*}
\end{definition}

In \cite{GSGMJ}, the authors introduced a new classes of functions of
generalized bounded variation and investigate the convergence of Fourier
series of function of that classes.

For the sequence $\Lambda =\{\lambda _{n}\}_{n=1}^{\infty }$ we denote
\begin{equation*}
\Lambda ^{\#}V_{1}(f)=\sup_{\{y_{i}\}\subset I}\sup_{\{I_{i}\}\in \Omega
}\sum_{i}\frac{|f(I_{i},{y_{i}})|}{\lambda _{i}},
\end{equation*}%
\begin{equation*}
\Lambda ^{\#}V_{2}(f)=\sup_{\{x_{j}\}\subset I}\sup_{\{J_{j}\}\in \Omega
}\sum_{j}\frac{|f(x_{j},J_{j}|}{\lambda _{j}}.
\end{equation*}

\begin{definition}
We say that the function $f$ belongs to the class $\Lambda ^{\#}BV$, if
\begin{equation*}
\Lambda ^{\#}V(f):=\Lambda ^{\#}V_{1}(f)+\Lambda ^{\#}V_{2}(f)<\infty .
\end{equation*}
\end{definition}
The notion of continuity of function in $\Lambda$-variation plays an important role in the investigation of convergence Fourier series of functions of bounded $\Lambda$-variation.
\begin{definition}
We say that the function $f\,$\ is continuous in $\left( \Lambda
^{1},\Lambda ^{2}\right) $-variation on $I^{2}$ and write $f\in C\left(
\Lambda ^{1},\Lambda ^{2}\right) V$, if%
\begin{equation*}
\lim\limits_{n\rightarrow \infty }\Lambda _{n}^{1}V_{1}\left( f\right)
=\lim\limits_{n\rightarrow \infty }\Lambda _{n}^{2}V_{2}\left( f\right) =0
\end{equation*}%
and%
\begin{equation*}
\lim\limits_{n\rightarrow \infty }\left( \Lambda _{n}^{1},\Lambda
^{2}\right) V_{1,2}\left( f\right) =\lim\limits_{n\rightarrow \infty }\left(
\Lambda ^{1},\Lambda _{n}^{2}\right) V_{1,2}\left( f\right) =0,
\end{equation*}%
where $\Lambda _{n}^{i}:=\left\{ \lambda _{k}^{i}\right\} _{k=n}^{\infty
}=\left\{ \lambda _{k+n}^{i}\right\} _{k=0}^{\infty },\ i=1,2.$
\end{definition}

\begin{definition}\label{d5}
We say that the function $f\,$\ is continuous in $\Lambda ^{\#}$-variation
on $I^{2}$ and write $f\in C\Lambda ^{\#}V$, if%
\begin{equation*}
\lim\limits_{n\rightarrow \infty }\Lambda _{n}^{\#}V\left( f\right)=0
\end{equation*}%
where $\Lambda _{n}:=\left\{ \lambda _{k}\right\} _{k=n}^{\infty }.$
\end{definition}

\begin{definition}
We say that the function $f\,$\ is continuous in $\Lambda ^{\ast }$%
-variation on $I^{2}$ and write $f\in C\Lambda ^{\ast }V$, if%
\begin{equation*}
\lim\limits_{n\rightarrow \infty }\Lambda _{n}^{1}V_{1}\left( f\right)
=\lim\limits_{n\rightarrow \infty }\Lambda _{n}^{2}V_{2}\left( f\right) =0
\end{equation*}%
and%
\begin{equation*}
\lim\limits_{n\rightarrow \infty }\Lambda _{n}^{\ast }V\left( f\right) =0
\end{equation*}
\end{definition}

Now, we define
\begin{equation*}
v_{1}^{\#}\left( n,f\right)
:=\sup\limits_{\{y_{i}\}_{i=1}^{n}}\sup\limits_{\{I_{i}\}\in \Omega
_{n}}\sum\limits_{i=1}^{n}\left\vert f\left( I_{i},y_{i}\right) \right\vert
,\quad n=1,2,\ldots ,
\end{equation*}%
\begin{equation*}
v_{2}^{\#}\left( m,f\right)
:=\sup\limits_{\{x_{j}\}_{j=1}^{m}}\sup\limits_{\{J_{k}\}\in \Omega
_{m}}\sum\limits_{j=1}^{m}\left\vert f\left( x_{j},J_{j}\right) \right\vert
,\quad m=1,2,\ldots .
\end{equation*}

The following theorems hold.

\begin{theorem}[Goginava, Sahakian \cite{GSGMJ}]
\label{embedding1}$\left\{ \frac{n}{\log n}\right\} ^{\#}BV\subset HBV$.
\end{theorem}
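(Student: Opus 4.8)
The plan is to check the three quantities defining membership in $HBV$ separately for $f\in\left\{n/\log n\right\}^{\#}BV$: the two partial harmonic variations $HV_{1}(f)$, $HV_{2}(f)$, and the mixed harmonic variation $\sum_{i,j}|f(\Delta_{i},J_{j})|/(ij)$. Writing $\lambda_{n}=n/\log n$, so that $1/\lambda_{n}=(\log n)/n\ge 1/n$, the two partial quantities are immediate: a single common point $y$ is a special case of the point sequences $\{y_{i}\}$ admitted in $\Lambda^{\#}V_{1}(f)$, and $1/n\le 1/\lambda_{n}$, so $HV_{1}(f)\le \Lambda^{\#}V_{1}(f)$ and likewise $HV_{2}(f)\le \Lambda^{\#}V_{2}(f)$ (the finitely many indices with $\log n<1$ contribute only a harmless constant). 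Thus the whole difficulty is concentrated in the mixed term.

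For the mixed term I would first pass from the harmonic weights to the counting functions $v_{1}^{\#},v_{2}^{\#}$ by a two-dimensional summation by parts. Fixing finite collections $E=\{\Delta_{i}\}_{i\le N}$, $F=\{J_{j}\}_{j\le M}$ and setting $\beta_{ij}=|f(\Delta_{i},J_{j})|$, $B(n,m)=\sum_{i\le n,\,j\le m}\beta_{ij}$, the factorization $1/(ij)=(1/i)(1/j)$ into positive decreasing sequences lets Abel summation in each variable give
\[
\sum_{i,j}\frac{\beta_{ij}}{ij}\le C\sum_{n,m}\frac{B(n,m)}{n^{2}m^{2}}+(\text{boundary terms}),
\]
where $B(n,m)\le V(n,m):=\sup\sum_{i\le n,\,j\le m}|f(\Delta_{i},J_{j})|$ is the mixed variation of order $(n,m)$. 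The boundary terms are estimated the same way and I will not dwell on them.

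The key step is a one-variable bound on $V(n,m)$. Expanding the mixed difference in the second variable, $f(\Delta_{i},J_{j})=f(\Delta_{i},d_{j})-f(\Delta_{i},c_{j})$, and applying the definition of $v_{1}^{\#}$ with the common points $d_{j}$ and $c_{j}$ yields $\sum_{i\le n}|f(\Delta_{i},J_{j})|\le 2v_{1}^{\#}(n,f)$ for each $j$, hence $V(n,m)\le 2m\,v_{1}^{\#}(n,f)$; expanding instead in the first variable gives symmetrically $V(n,m)\le 2n\,v_{2}^{\#}(m,f)$, so
\[
V(n,m)\le 2\min\{\,m\,v_{1}^{\#}(n,f),\ n\,v_{2}^{\#}(m,f)\,\}.
\]
I would then split the double sum according to $m\le n$ and $m>n$, using the first bound on the former region and the second on the latter. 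On $\{m\le n\}$ the inner sum $\sum_{m\le n}1/m\sim\log n$ produces exactly the logarithmic factor needed, reducing the estimate to $\sum_{n}v_{1}^{\#}(n,f)(\log n)/n^{2}$, which is finite precisely because (by one more Abel summation) it is comparable to $\Lambda^{\#}V_{1}(f)$; the region $\{m>n\}$ is handled symmetrically by $\Lambda^{\#}V_{2}(f)$.

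The main obstacle is that the logarithmic bookkeeping is tight. The single logarithmic factor supplied by the hypothesis (the weight $(\log n)/n$ rather than $1/n$) is exactly the one spent on the inner harmonic sum $\sum_{m\le n}1/m\sim\log n$, so $V(n,m)$ must be estimated in the correct variable on each region and no further logarithmic loss can be tolerated; in particular a crude triangle-inequality bound applied in both variables at once would diverge. I also need the comparison $\Lambda^{\#}V_{1}(f)\asymp\sum_{n}v_{1}^{\#}(n,f)(\log n)/n^{2}$, whose proof rests on the fact that the increments of $v_{1}^{\#}(n,f)$ are nonincreasing, so that the extremal ordering in the definition of $\Lambda^{\#}V_{1}$ is the decreasing rearrangement (the weight $(\log n)/n$ being eventually decreasing). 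Establishing that monotonicity of the increments is the one genuinely non-formal point of the argument.
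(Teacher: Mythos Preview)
The paper does not prove this theorem: it is quoted from \cite{GSGMJ} and stated here without argument, so there is no proof in the present paper to compare against. I can only comment on the soundness of your sketch.

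Your treatment of $HV_1$ and $HV_2$ is correct, and the reduction of the mixed harmonic variation via two Abel summations to $\sum_n v_1^{\#}(n,f)(\log n)/n^2$ (and its analogue for $s=2$) is also fine; the bound $V(n,m)\le 2\min\{m\,v_1^{\#}(n,f),\,n\,v_2^{\#}(m,f)\}$ and the diagonal splitting are standard and correct.

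The gap is in the last step, and it is genuine. You need
\[
\sum_{n}\frac{v_1^{\#}(n,f)\log n}{n^2}\;\lesssim\;\Lambda^{\#}V_1(f),\qquad \lambda_n=\frac{n}{\log n},
\]
and you claim this follows from concavity of $n\mapsto v_1^{\#}(n,f)$. But concavity, even if granted, yields the \emph{opposite} inequality. Writing $a_n=v_1^{\#}(n,f)-v_1^{\#}(n-1,f)$, Abel summation turns the left side into a constant times $\sum_n a_n(\log n)/n$. Now take any admissible configuration $(I_i,y_i)_{i\le N}$ with $c_i:=|f(I_i,y_i)|$ arranged decreasingly; the partial sums satisfy $\sum_{i\le k}c_i\le v_1^{\#}(k,f)=\sum_{i\le k}a_i$ for all $k\le N$, with equality at $k=N$ for the optimal $N$-configuration. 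Abel summation against the eventually decreasing weights $(\log i)/i$ then gives $\sum_i c_i(\log i)/i\le\sum_i a_i(\log i)/i$, and taking the supremum over configurations yields $\Lambda^{\#}V_1(f)\le\sum_n a_n(\log n)/n$, not $\ge$. The underlying obstruction is that the optimal $n$-configurations for different $n$ need not nest or combine into a single admissible configuration for $\Lambda^{\#}V_1$: the intervals used at level $n$ may overlap those used at level $n'$, and concavity says nothing about this. (Incidentally, concavity of $v_1^{\#}$ with \emph{variable} $y_i$ is itself less immediate than for Chanturia's modulus of variation, since the usual ``merge two adjacent intervals'' device fails when the two intervals carry different $y$-values.) Note also that Theorem~\ref{embedding2} of the paper is exactly the \emph{converse} implication, so what you need here is not a restatement of something already available. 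You should consult \cite{GSGMJ} to see how this step is actually carried out.
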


\begin{theorem}[Goginava, Sahakian \cite{GSGMJ}]
\label{embedding2} Suppose
\begin{equation*}
\sum\limits_{n=1}^{\infty }\frac{v_{s}^{\#}\left( f;n\right) \log \left(
n+1\right) }{n^{2}}<\infty ,\quad s=1,2.
\end{equation*}%
Then $f\in \left\{ \frac{n}{\log \left( n+1\right) }\right\} ^{\#}BV$.
\end{theorem}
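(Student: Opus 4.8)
The plan is to prove the two estimates $\Lambda^{\#}V_1(f)<\infty$ and $\Lambda^{\#}V_2(f)<\infty$ separately for the sequence $\lambda_n = n/\log(n+1)$; since the two variables play symmetric roles, I focus on $\Lambda^{\#}V_1(f)$ and the bound for $\Lambda^{\#}V_2(f)$ follows verbatim with $v_2^{\#}$ in place of $v_1^{\#}$. Write $w_i := 1/\lambda_i = \log(i+1)/i$, and note first that $\{w_i\}$ is decreasing: the function $g(x)=\log(x+1)/x$ has derivative $g'(x) = \bigl(x/(x+1)-\log(x+1)\bigr)/x^2$, and since $\log(x+1)-x/(x+1)$ is positive and increasing for $x\ge 1$, we have $g'(x)<0$. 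This monotonicity of the weights is what lets me compare an arbitrary weighted sum with the partial maxima $v_1^{\#}(\cdot,f)$.

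Fix a finite collection $\{I_i\}$ of nonoverlapping intervals and points $\{y_i\}$, and set $a_i := |f(I_i,y_i)|$. Because the intervals are ordered arbitrarily and the weights $w_i$ are decreasing, the rearrangement inequality shows that $\sum_i a_i w_i$ is largest when the $a_i$ are arranged in nonincreasing order; hence it suffices to bound $\sum_i a_i w_i$ assuming $a_1 \geq a_2 \geq \cdots \geq 0$. Putting $A_n := \sum_{i=1}^n a_i$, the key structural observation is that $A_n \leq v_1^{\#}(n,f)$, since $A_n$ is a sum of $|f(I_i,y_i)|$ over $n$ nonoverlapping intervals with marked points, precisely the configuration over which $v_1^{\#}(n,f)$ is the supremum.

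Next I apply Abel summation. With $A_0=0$,
\begin{equation*}
\sum_{i=1}^{N} a_i w_i = \sum_{i=1}^{N-1} A_i\,(w_i - w_{i+1}) + A_N w_N \leq \sum_{i=1}^{N-1} v_1^{\#}(i,f)\,(w_i-w_{i+1}) + v_1^{\#}(N,f)\,w_N,
\end{equation*}
using $w_i - w_{i+1} \geq 0$ together with $A_i \leq v_1^{\#}(i,f)$. A mean value estimate gives $0 \le w_i - w_{i+1} = -g'(\xi) \le \log(i+2)/i^2 \le C\,\log(i+1)/i^2$ for some $\xi\in(i,i+1)$, so the first sum is dominated by $C\sum_{i=1}^{\infty} v_1^{\#}(i,f)\,\log(i+1)/i^2$, which is finite by hypothesis.

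It remains to control the boundary term $v_1^{\#}(N,f)\,w_N = v_1^{\#}(N,f)\,\log(N+1)/N$, and this is the step that requires the most care and is the main obstacle: the body of the Abel sum converges directly from the assumption, but vanishing of the boundary term is not automatic. Here I use that $v_1^{\#}(n,f)$ is nondecreasing in $n$ (one may always adjoin a degenerate interval), which yields, for every $N$,
\begin{equation*}
\sum_{n=N}^{2N} v_1^{\#}(n,f)\,\frac{\log(n+1)}{n^2} \ge v_1^{\#}(N,f)\,(N+1)\,\frac{\log(N+1)}{4N^2} \ge \frac14\, v_1^{\#}(N,f)\,\frac{\log(N+1)}{N}.
\end{equation*}
The left-hand side is a tail of the convergent series in the hypothesis, hence tends to $0$; therefore $v_1^{\#}(N,f)\,w_N \to 0$. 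Combining the two estimates shows $\sum_{i=1}^N a_i w_i$ is bounded by a constant independent of $N$ and of the chosen configuration, so $\Lambda^{\#}V_1(f)<\infty$; the same argument gives $\Lambda^{\#}V_2(f)<\infty$, and therefore $f\in\{n/\log(n+1)\}^{\#}BV$.
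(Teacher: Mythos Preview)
The paper does not actually prove this theorem; it is quoted from \cite{GSGMJ} without proof, so there is no argument here to compare against. That said, your proof is correct and is in fact the standard route for such embeddings: Abel summation against the partial maxima $v_1^{\#}(n,f)$, using that the weights $w_i=\log(i+1)/i$ are decreasing so that the rearrangement step is legitimate (the supremum in the definition of $\Lambda^{\#}V_1$ is taken over all orderings of the intervals, which justifies assuming $a_1\ge a_2\ge\cdots$). Your estimate $w_i-w_{i+1}\le C\log(i+1)/i^2$ and the tail argument showing $v_1^{\#}(N,f)\,w_N\to 0$ are both fine; the latter is the only nontrivial point, and you handle it cleanly by bounding a block of the convergent series from below. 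One cosmetic remark: rather than ``adjoin a degenerate interval,'' it is cleaner to note that any admissible configuration of $n-1$ intervals can be extended to $n$ intervals by inserting one more nonoverlapping interval (or by splitting an existing one, using the triangle inequality), which gives the monotonicity of $v_1^{\#}(n,f)$ directly.
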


\begin{theorem}[Goginava \cite{GoSMJ}]
\label{embedding4}Let $\alpha ,\beta \in \left( 0,1\right) $,  $\alpha
+\beta <1$ and%
\begin{equation*}
\sum\limits_{j=1}^{\infty }\frac{v_{s}^{\#}\left( f;2^{j}\right) }{%
2^{j\left( 1-\left( \alpha +\beta \right) \right) }}<\infty ,\quad s=1,2.
\end{equation*}%
Then $f\in C\left\{ n^{1-\left( \alpha +\beta \right) }\right\} ^{\#}V$.
\end{theorem}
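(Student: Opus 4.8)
The plan is to prove directly that the tail variations $\Lambda_n^{\#}V(f)$ vanish as $n\to\infty$, where $\Lambda=\{\lambda_k\}$ with $\lambda_k=k^{1-(\alpha+\beta)}$. Set $\gamma:=1-(\alpha+\beta)\in(0,1)$, so that $\lambda_k=k^{\gamma}$ and the hypothesis reads $\sum_{j}v_s^{\#}(f;2^j)2^{-j\gamma}<\infty$ for $s=1,2$. By Definition~\ref{d5} and the symmetry between the two coordinates it suffices to treat $\Lambda_n^{\#}V_1(f)$; the term $\Lambda_n^{\#}V_2(f)$ is handled identically using the hypothesis with $s=2$.

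First I would reduce the weighted sum to a decreasing rearrangement. Fix a finite collection $\{I_i\}_{i=1}^{N}$ of nonoverlapping intervals together with points $\{y_i\}$, and put $a_i=|f(I_i,y_i)|$. Since the intervals in a collection from $\Omega$ are ordered arbitrarily while the weights $1/\lambda_{n+i-1}$ decrease in $i$, the rearrangement inequality shows that $\sum_i a_i/\lambda_{n+i-1}$ is largest when the $a_i$ are arranged in decreasing order; hence it suffices to bound $\sum_{i=1}^{N}a_i^{*}/\lambda_{n+i-1}$, where $a_1^{*}\ge a_2^{*}\ge\cdots$ is the decreasing rearrangement. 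The key observation is that the partial sums satisfy $S_k:=\sum_{i=1}^{k}a_i^{*}\le v_1^{\#}(f;k)$, because the $k$ intervals carrying the largest values constitute an admissible configuration of $k$ nonoverlapping intervals with associated points.

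Next I would apply Abel summation. Writing $b_i=1/\lambda_{n+i-1}$ and using $b_i\to0$, telescoping gives $b_N=\sum_{i\ge N}(b_i-b_{i+1})$, whence
\begin{equation*}
\sum_{i=1}^{N}\frac{a_i^{*}}{\lambda_{n+i-1}}=S_Nb_N+\sum_{i=1}^{N-1}S_i(b_i-b_{i+1})\le\sum_{i=1}^{\infty}v_1^{\#}(f;i)\left(\frac{1}{\lambda_{n+i-1}}-\frac{1}{\lambda_{n+i}}\right),
\end{equation*}
where $S_i\le v_1^{\#}(f;i)$ and the monotonicity of $v_1^{\#}$ in its argument are used to absorb the boundary term $S_Nb_N$. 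Taking the supremum over all collections yields
\begin{equation*}
\Lambda_n^{\#}V_1(f)\le\sum_{i=1}^{\infty}v_1^{\#}(f;i)\left(\frac{1}{\lambda_{n+i-1}}-\frac{1}{\lambda_{n+i}}\right).
\end{equation*}
By the mean value theorem $\lambda_{n+i-1}^{-1}-\lambda_{n+i}^{-1}\le C(n+i-1)^{-\gamma-1}$, so $\Lambda_n^{\#}V_1(f)\le C\sum_{i}v_1^{\#}(f;i)(n+i-1)^{-\gamma-1}$. For $n=1$ the monotonicity of $v_1^{\#}$ lets me compare the resulting full series to its dyadic blocks, $\sum_{i}v_1^{\#}(f;i)i^{-\gamma-1}\le C\sum_{j}v_1^{\#}(f;2^j)2^{-j\gamma}<\infty$, which already establishes $f\in\Lambda^{\#}BV$.

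Finally, to upgrade this to continuity I would pass to the limit $n\to\infty$ in the last bound by the standard two-step split. Given $\varepsilon>0$, choose $M$ so that the convergent tail $\sum_{i>M}v_1^{\#}(f;i)i^{-\gamma-1}<\varepsilon$; since $n+i-1\ge i$ this controls $\sum_{i>M}v_1^{\#}(f;i)(n+i-1)^{-\gamma-1}$ uniformly in $n$. The remaining head satisfies $\sum_{i\le M}v_1^{\#}(f;i)(n+i-1)^{-\gamma-1}\le n^{-\gamma-1}\sum_{i\le M}v_1^{\#}(f;i)\to0$ as $n\to\infty$ with $M$ fixed. Hence $\limsup_n\Lambda_n^{\#}V_1(f)\le C\varepsilon$ for every $\varepsilon>0$, giving $\Lambda_n^{\#}V_1(f)\to0$. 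I expect the main obstacles to be the clean treatment of the Abel boundary term (resolved by the telescoping identity together with the monotonicity of $v_1^{\#}$) and the uniform-in-$n$ control required to justify the limit, namely the split that separates the convergent tail of the series from the head whose denominators grow with $n$.
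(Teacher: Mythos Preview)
The paper does not prove this theorem; it is quoted from \cite{GoSMJ} and used as a black box, so there is no in-paper argument to compare yours against. Your proof is correct: the rearrangement step, the bound $S_k\le v_1^{\#}(f;k)$, the Abel summation with the telescoped boundary term, the mean-value estimate on $\lambda_{n+i-1}^{-1}-\lambda_{n+i}^{-1}$, and the head/tail split all go through as written.

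One remark on economy. Once your Abel-summation argument at $n=1$ yields $f\in\{n^{\gamma}\}^{\#}BV$ (your ``$\Lambda^{\#}BV$'' conclusion with $\gamma=1-(\alpha+\beta)$), the continuity in $\Lambda^{\#}$-variation follows immediately from Corollary~1.1 of the present paper (i.e.\ from Theorem~\ref{continous}, since $\lambda_{2n}/\lambda_n=2^{\gamma}>1$). So your final $\varepsilon$-splitting paragraph, while correct, is not strictly needed here: the paper supplies exactly the tool that converts membership into continuity for power weights.
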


\begin{theorem}[Goginava \cite{GoSMJ}]\label {GO10}
\label{embedding3}Let $\alpha ,\beta \in \left( 0,1\right) $ and $\alpha
+\beta <1$. Then%
\begin{equation*}
C\left\{ i^{1-\left( \alpha +\beta \right) }\right\} ^{\#}V\subset C\left\{
i^{1-\alpha }\right\} \left\{ j^{1-\beta }\right\} V.
\end{equation*}
\end{theorem}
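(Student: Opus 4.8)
The plan is to verify the four conditions defining $f\in C\{i^{1-\alpha}\}\{j^{1-\beta}\}V$. Set $\Lambda^1=\{i^{1-\alpha}\}$, $\Lambda^2=\{j^{1-\beta}\}$. Two of them, $\lim_n(\Lambda^1_n)V_1(f)=0$ and $\lim_n(\Lambda^2_n)V_2(f)=0$, are immediate: since $\beta>0$ we have $(i+n)^{1-\alpha}\ge(i+n)^{1-(\alpha+\beta)}$, and allowing a separate point $y_i$ for each interval only enlarges the supremum, so
\begin{equation*}
(\Lambda^1_n)V_1(f)=\sup_y\sup_E\sum_i\frac{|f(\Delta_i,y)|}{(i+n)^{1-\alpha}}\le\Big\{i^{1-(\alpha+\beta)}\Big\}^{\#}_nV_1(f)\longrightarrow 0,
\end{equation*}
and symmetrically for $(\Lambda^2_n)V_2(f)$. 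The whole content therefore sits in the two mixed quantities $(\Lambda^1_n,\Lambda^2)V_{1,2}(f)$ and $(\Lambda^1,\Lambda^2_n)V_{1,2}(f)$.

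Before estimating them I would extract the quantitative consequence of the hypothesis. Abel summation applied to the sum defining $\{i^{1-(\alpha+\beta)}\}^{\#}V_s(f)$, after rearranging the increments so that their partial sums equal $v^{\#}_s(m,f)$, yields the two-sided bound
\begin{equation*}
\Big\{i^{1-(\alpha+\beta)}\Big\}^{\#}V_s(f)\asymp\sum_{m\ge1}\frac{v^{\#}_s(m,f)}{m^{2-(\alpha+\beta)}},\qquad s=1,2.
\end{equation*}
Since $f\in C\{i^{1-(\alpha+\beta)}\}^{\#}V\subset\{i^{1-(\alpha+\beta)}\}^{\#}BV$, the right-hand series converge, and by dyadic condensation (using that $v^{\#}_s$ is non-decreasing)
\begin{equation*}
\sum_{p\ge0}\frac{v^{\#}_s(2^{p},f)}{2^{p(1-(\alpha+\beta))}}=\sum_{p\ge0}\varepsilon^{(s)}_p<\infty,\qquad\varepsilon^{(s)}_p:=\frac{v^{\#}_s(2^{p},f)}{2^{p(1-(\alpha+\beta))}}.
\end{equation*}
This is exactly the series figuring in Theorem \ref{embedding4}; here it appears as a \emph{consequence} of the hypothesis. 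In particular $\varepsilon^{(s)}_p\to0$ and, crucially, the tails $\sum_{p>P}\varepsilon^{(s)}_p$ tend to $0$.

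For the mixed term I would fix admissible collections $E=\{\Delta_i\}$, $F=\{J_j\}$ and decompose the index ranges dyadically, $i\sim2^p$, $j\sim2^q$. On such a block $(i+n)^{1-\alpha}\ge(2^p+n)^{1-\alpha}$ and $j^{1-\beta}\ge2^{q(1-\beta)}$, so it suffices to bound the block mass $B_{p,q}:=\sum_{i\sim2^p}\sum_{j\sim2^q}|f(\Delta_i,J_j)|$. Writing $f(\Delta_i,J_j)=f(\Delta_i,d_j)-f(\Delta_i,c_j)$ and summing first in $i$ at the fixed heights $c_j,d_j$ gives $B_{p,q}\le2^{q+1}v^{\#}_1(2^p,f)$; writing instead $f(\Delta_i,J_j)=f(b_i,J_j)-f(a_i,J_j)$ and summing first in $j$ gives $B_{p,q}\le2^{p+1}v^{\#}_2(2^q,f)$. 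Using the first bound for $q\le p$ and the second for $q>p$, the geometric factors $2^{q\beta}$ (resp. $2^{-q\alpha}$) produced by the weights are summable in $q$ and peak at the crossover $q\approx p$, whence
\begin{equation*}
\sum_{q}\frac{B_{p,q}}{(2^p+n)^{1-\alpha}2^{q(1-\beta)}}\ \lesssim\ \frac{2^{p(1-\alpha)}}{(2^p+n)^{1-\alpha}}\big(\varepsilon^{(1)}_p+\varepsilon^{(2)}_p\big).
\end{equation*}
Summing in $p$ and splitting at $2^p\approx n$, the range $2^p>n$ contributes $\lesssim\sum_{2^p>n}(\varepsilon^{(1)}_p+\varepsilon^{(2)}_p)\to0$ by the tail bound, while the range $2^p\le n$ contributes $\lesssim\sum_{2^p\le n}2^{-(\log_2 n-p)(1-\alpha)}(\varepsilon^{(1)}_p+\varepsilon^{(2)}_p)\to0$ since $\varepsilon^{(s)}_p\to0$ and the weights concentrate near $p\approx\log_2 n$. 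Thus $(\Lambda^1_n,\Lambda^2)V_{1,2}(f)\to0$; the estimate for $(\Lambda^1,\Lambda^2_n)V_{1,2}(f)$ is identical with $\alpha,\beta$ interchanged.

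The delicate point, and the step I expect to absorb essentially all the work, is this block estimate. One must resist bounding the double difference $f(\Delta_i,J_j)$ by point values \emph{globally}: any such splitting replaces a genuine mixed increment by quantities that do not decay in the opposite index, after which the divergent series $\sum_i(i+n)^{-(1-\alpha)}$ and $\sum_j j^{-(1-\beta)}$ render the estimate useless. The splitting is admissible only \emph{after} the dyadic localization, and only when the two one-sided bounds $2^{q}v^{\#}_1(2^p,f)$ and $2^{p}v^{\#}_2(2^q,f)$ are balanced against each other across scales; the positivity of both $\alpha$ and $\beta$ (equivalently, that $1-\alpha$ and $1-\beta$ strictly exceed the exponent $1-(\alpha+\beta)$ controlling $f$) is exactly what makes the geometric series in $q$ convergent and leaves a single summable factor $\varepsilon^{(s)}_p$ per scale. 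Verifying the two-sided Abel bound of the second paragraph and the bookkeeping of the crossover sum are the only places where real care is needed.
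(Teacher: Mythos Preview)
The paper does not prove this statement here; Theorem~\ref{GO10} is quoted from \cite{GoSMJ} without argument, so there is no in-paper proof to compare against. I therefore evaluate your sketch on its own.

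Your dyadic block strategy for the mixed variation is sound, and the two single-variable limits $(\Lambda^1_n)V_1(f)\to0$, $(\Lambda^2_n)V_2(f)\to0$ follow exactly as you say. The substantive gap is the ``two-sided Abel bound''
\[
\Big\{i^{1-(\alpha+\beta)}\Big\}^{\#}V_s(f)\ \asymp\ \sum_{m\ge1}\frac{v^{\#}_s(m,f)}{m^{2-(\alpha+\beta)}}.
\]
Abel summation, applied to a \emph{fixed} collection $\{I_i,y_i\}$ with $a_i=|f(I_i,y_i)|$ rearranged decreasingly, gives $\sum_i a_i/i^{1-(\alpha+\beta)}\asymp\sum_m A_m/m^{2-(\alpha+\beta)}$ with $A_m=\sum_{i\le m}a_i\le v^{\#}_s(m,f)$. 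This yields only the upper bound $\Lambda^{\#}V_s(f)\lesssim\sum_m v^{\#}_s(m,f)/m^{2-(\alpha+\beta)}$. What you actually need for your argument is the \emph{lower} bound (so that $\Lambda^{\#}V_s(f)<\infty$ forces the series to converge), and that direction requires exhibiting a single collection whose partial sums $A_m$ are uniformly comparable to $v^{\#}_s(m,f)$ for all $m$ simultaneously. This is not a consequence of rearranging one collection; the near-extremal collections for different values of $m$ may use overlapping $x$-intervals and cannot be merged into one admissible family in any obvious way. Note also that Theorem~\ref{embedding4} in the paper records exactly the implication ``series convergent $\Rightarrow f\in C\{n^{1-(\alpha+\beta)}\}^{\#}V$''; you are invoking its converse, which the paper never asserts. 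Without this step you lose the summability $\sum_p\varepsilon^{(s)}_p<\infty$, and then your tail estimate for the range $2^p>n$ collapses.

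A secondary issue: in your displayed bound
\[
\sum_q \frac{B_{p,q}}{(2^p+n)^{1-\alpha}2^{q(1-\beta)}}\ \lesssim\ \frac{2^{p(1-\alpha)}}{(2^p+n)^{1-\alpha}}\bigl(\varepsilon^{(1)}_p+\varepsilon^{(2)}_p\bigr),
\]
the $q>p$ contribution is $\dfrac{2^{p(1-\alpha)}}{(2^p+n)^{1-\alpha}}\sum_{q>p}\varepsilon^{(2)}_q\,2^{-(q-p)\alpha}$, which is not dominated by $\varepsilon^{(2)}_p$ in general (the sequence $\varepsilon^{(2)}_q$ need not be monotone). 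The remedy is simple: keep the full tail and interchange the $p$ and $q$ summations for that piece; one then splits at $2^q\approx n$ rather than $2^p\approx n$, and the same endgame you describe (geometric damping on one side, tail of a convergent series on the other) goes through --- provided, again, that $\sum_q\varepsilon^{(2)}_q<\infty$ is available.
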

The next theorem shows, that for some sequences $\Lambda$ the classes $\Lambda^\#V$ and $C\Lambda^\#V$ coincide.
\begin{theorem}\label{continous}
Let the sequence $\Lambda=\left\{\lambda_n\right\}$ be as in (\ref{Lambda})  and
\begin{equation}\label{maincond}
\liminf_{n\to\infty}\frac{\lambda_{2n}} {\lambda_{n}}=q>1.
\end{equation}
Then $\Lambda^\#V=C\Lambda^\#V$.
\end{theorem}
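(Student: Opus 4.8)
The plan is to prove the two inclusions separately; only one of them carries content. First note that the numbers $\Lambda_n^{\#}V(f)$ decrease in $n$, since the shifted sequence $\Lambda_n=\{\lambda_k\}_{k=n}^{\infty}$ has larger denominators than $\Lambda=\Lambda_1$. Consequently $\Lambda^{\#}V(f)=\Lambda_1^{\#}V(f)\ge\Lambda_n^{\#}V(f)$, so membership in $C\Lambda^{\#}V$ already forces $\Lambda^{\#}V(f)<\infty$; thus $C\Lambda^{\#}V\subseteq\Lambda^{\#}BV$ with no hypothesis on $\Lambda$. It remains to show that $f\in\Lambda^{\#}BV$ implies $\Lambda_n^{\#}V(f)\to0$, and by the symmetry of the two variables it suffices to treat $\Lambda_n^{\#}V_1(f)$.

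Fix a collection and reorder its values $a_i:=|f(I_i,y_i)|$ so that $a_1\ge a_2\ge\cdots$; this ordering maximizes every weighted sum $\sum_i a_i/\lambda_{j+i-1}$, so it is extremal. Writing $L_M:=\sum_{k=1}^{M}1/\lambda_k$, two elementary facts drive the argument. First, among decreasing sequences with a fixed sum the constant one minimizes $\sum a_i/\lambda_i$, so $\sum_{i\le M}a_i/\lambda_i\ge (L_M/M)\sum_{i\le M}a_i$; applied to the extremal $M$-term collection this yields $v_1^{\#}(M,f)\le M\,\Lambda^{\#}V_1(f)/L_M$ (in particular $v_1^{\#}(M,f)\le\Lambda^{\#}V_1(f)\,\lambda_M$). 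Second, splitting any extremal sum at an index $m\le n$ and using $\lambda_{n+i-1}\ge\lambda_i$ in the tail gives
\[
\Lambda_n^{\#}V_1(f)\le\frac{1}{\lambda_n}\sum_{i\le m}a_i+\sum_{i>m}\frac{a_i}{\lambda_{n+i-1}}\le\frac{v_1^{\#}(m,f)}{\lambda_n}+R_m,\qquad R_m:=\sup\sum_{i>m}\frac{a_i}{\lambda_i}.
\]
Hypothesis (\ref{maincond}) provides $q'\in(1,q)$ and $N_0$ with $\lambda_{2k}\ge q'\lambda_k$ for $k\ge N_0$, whence $\lambda_k\gtrsim k^{\log_2 q'}$ and, for $m=o(n)$, $\lambda_m/\lambda_n\to0$. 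Choosing such an $m=m(n)\to\infty$ makes the head $v_1^{\#}(m,f)/\lambda_n\le\Lambda^{\#}V_1(f)\,\lambda_m/\lambda_n$ tend to $0$, so the entire theorem reduces to the single statement $R_m\to0$.

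To attack $R_m\to0$ I would first record the automatic consequences of $f\in\Lambda^{\#}BV$: subadditivity of $v_1^{\#}$ gives $v_1^{\#}(M)/M\to\rho:=\inf_M v_1^{\#}(M)/M$, and since $\Lambda^{\#}V_1(f)\ge L_M v_1^{\#}(M)/M\ge\rho L_M\to\infty$, we must have $\rho=0$, i.e. $v_1^{\#}(M)=o(M)$; the same inequality shows that $h(M):=L_M v_1^{\#}(M)/M$ is bounded. The remaining task is to upgrade this boundedness to $h(M)\to0$, and this is where (\ref{maincond}) is used essentially. The plan is a contradiction argument: if $R_m\ge\delta>0$ for all $m$, then for every $m$ the rank-$>m$ part of some extremal collection carries weighted variation $>\delta$, and by the first fact such parts consist of many intervals whose heights tend to $0$. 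Using the geometric separation $\lambda_{2k}\ge q'\lambda_k$ to fix scales $m_1\ll m_2\ll\cdots$ along which the weights $1/\lambda_i$ drop by a fixed factor from block to block, one extracts from these parts mutually disjoint sub-collections, one per scale, each still contributing a fixed amount $\gtrsim\delta$ to $\sum_i a_i/\lambda_i$; their union is then a single admissible collection whose weighted sum diverges, contradicting $\Lambda^{\#}V_1(f)<\infty$.

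The main obstacle is precisely this last combination. At a single scale the extremal (flat) configurations are the worst case and yield only the constant bound $\Lambda^{\#}V_1(f)\gtrsim\delta$; moreover, since $\sum 1/\lambda_n=\infty$ forces $q\le2$ and hence $L_{2M}/L_M\ge1$, the relation $h(2M)\le(L_{2M}/L_M)\,h(M)$ admits no contraction factor $<1$, so no single doubling of scale can make $h$ decay and contributions from infinitely many scales must be accumulated. The only thing obstructing a naive summation is that the interval families of different collections $E_m$ overlap, and the role of (\ref{maincond}) is exactly to push the relevant scales apart by a fixed geometric factor, thereby providing the room needed to disjointify them. Once this disjointification is carried out one obtains $R_m\to0$; the head estimate of the second paragraph then gives $\Lambda_n^{\#}V_1(f)\to0$, and the symmetric argument for $\Lambda_n^{\#}V_2(f)$ completes the proof that $\Lambda^{\#}V=C\Lambda^{\#}V$.
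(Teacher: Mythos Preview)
Your reduction is sound up to the point where you must show $R_m\to0$: the trivial inclusion, the monotonicity of $\Lambda_n^{\#}V_1(f)$, the Chebyshev--type bound $v_1^{\#}(M,f)\le M\,\Lambda^{\#}V_1(f)/L_M$, and the head/tail split are all correct. But the whole content of the theorem sits in $R_m\to0$, and there you only describe a plan (``extract mutually disjoint sub-collections, one per scale, each contributing $\gtrsim\delta$, and sum them'') without carrying it out. The obstacle you yourself flag is real: the extremal collections $E_m$ at different values of $m$ have intervals that may overlap arbitrarily, and the geometric growth condition \eqref{maincond} separates \emph{indices} in the weight sequence, not \emph{intervals} in $I$. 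Nothing in your outline explains how to pass from the former to the latter, so the argument has a genuine gap at its crucial step.

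The paper's proof avoids this infinite accumulation entirely by a short self-improvement trick. Assuming (after normalization) $\lim_n\Lambda_n^{\#}V_1(f)=1$, it takes a near-extremal collection $\{\delta_i\}$ at shift $N\gg N'$, splits it into odd- and even-indexed halves, and uses $\lambda_{N+2i-1}/\lambda_{N'+i}>q_0$ to ``compress'' each half to shift $N'$, getting $I_1'+I_2'\ge q_0(1-\varepsilon)$. A second near-extremal collection at a much larger shift $M$ is then refined so that almost all of its intervals lie inside some $\delta_i$; splitting it according to whether the containing $\delta_i$ has odd or even index gives $J_1+J_2\ge1-2\varepsilon$. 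Now $I_1'\cup J_2$ and $I_2'\cup J_1$ are each legitimate families of pairwise non-overlapping intervals (odd $\delta_i$'s together with sub-intervals of even $\delta_i$'s, and vice versa), and by pigeonhole one of them has weighted sum $\ge(q_0+1-3\varepsilon)/2$. Hence $\Lambda_{N'}^{\#}V_1(f)\ge(q_0+1)/2>1$, contradicting the assumed limit. The odd/even split is exactly the disjointification device you were looking for, and it lets the argument close with just two collections and a pigeonhole, rather than infinitely many scales.
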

\begin{proof}
Suppose to the contrary, that there exists a function $f\in \Lambda^\#V$ for which (see Definition \ref{d5})
$\liminf\limits_{n\rightarrow \infty }\Lambda _{n}^{\#}V\left( f\right)>0$.
Without loss of generality, we can assume that $\liminf\limits_{n\rightarrow \infty }\Lambda _{n}^{\# }V_1\left( f\right)=\delta>0$ and that $\delta=1$. Then, taking into account that the sequence
$\{\Lambda _{n}^{\# }V_1(f)\}$ is decreasing, we have
\begin{equation}\label {delta}
\lim\limits_{n\rightarrow \infty }\Lambda _{n}^{\# }V_1\left( f\right) =1.
\end{equation}
Let a natural $k$ and a numbers $\varepsilon>0$, $q_0\in(1,q)$ be fixed.

According to  (\ref{maincond}) and (\ref{delta}) there exist a natural $N^\prime>k$ such that
\begin{equation}\label{Nprime}
\frac{\lambda_{2n}} {\lambda_{n}}> q_0,\quad \Lambda _{n}^{\# }V\left( f\right)> 1- \varepsilon \quad\text{for}\quad n\geq N^\prime.
\end{equation}
Then for a natural $N>2N^\prime$ there are a set of points $\{y_i\}_{i=1}^{2i_0}$ and a set of nonoverlapping   intervals $\{\delta_i\}_{i=1}^{2i_0}\in \Omega$ such that
\begin{equation}\label{sumI}
I:=\sum_{i=1}^{2i_0}\frac {|f(\delta_i,y_i)|}{\lambda_{N+i}}\geq 1-\varepsilon.
\end{equation}
Adding, if necessary, new summands in (\ref{sumI}) we can assume that
$$
\bigcup_{i=1}^{2i_0}\delta_i=(0,1).
$$
Denote
\begin{equation}\label{sumI12}
I_1:=\sum_{i=1}^{i_0}\frac {|f(\delta_{2i-1},y_{2i-1})|}{\lambda_{N+2i-1}}, \qquad
I_2:=\sum_{i=1}^{i_0}\frac {|f(\delta_{2i},y_{2i})|}{\lambda_{N+2i}}.
\end{equation}
Since $N>2N^\prime$ implies that $N+2i-1\geq2(N^\prime+i)$, from (\ref{Nprime}) and  (\ref{sumI12}) we have
\begin{equation}\label{sumI1prime}
I_1^\prime:=\sum_{i=1}^{i_0}\frac {|f(\delta_{2i-1},y_{2i-1})|}{\lambda_{N^\prime+i}}=
\sum_{i=1}^{i_0}\frac {|f(\delta_{2i-1},y_{2i-1})|}{\lambda_{N+2i-1}}\cdot
\frac {\lambda_{N+2i-1}}{\lambda_{N^\prime+i}}>q_0I_1
\end{equation}
and
\begin{equation}\label{sumI2prime}
I_2^\prime:=\sum_{i=1}^{i_0}\frac {|f(\delta_{2i},y_{2i})|}{\lambda_{N^\prime+i}}=
\sum_{i=1}^{i_0}\frac {|f(\delta_{2i},y_{2i})|}{\lambda_{N+2i}}\cdot
\frac {\lambda_{N+2i}}{\lambda_{N^\prime+i}}>q_0I_2.
\end{equation}
Consequently, by (\ref{sumI}),
\begin{equation}\label{sumIprime}
I^\prime:=I_1^\prime+I_2^\prime\geq q_0(I_1+I_2)=q_0I\geq q_0(1-\varepsilon).
\end{equation}
Now, we take natural $M$ such that,
\begin{equation}\label{M}
M>N+2(i_0+1)\quad\text{and}\quad \frac {2(2i_0+1)}{\lambda_M}\sup_{x\in [0,1]}|f(x)|<\varepsilon,
\end{equation}
and using (\ref{Nprime}), we find a set of points $\{z_j\}_{j=1}^{j_0}$ set of nonoverlapping   intervals $\{\Delta_j\}_{j=1}^{j_0}\in \Omega$ such that
\begin{equation}\label{sum}
\sum_{j=1}^{j_0}\frac {|f(\Delta_j,z_j)|}{\lambda_{M+j}}\geq 1-\varepsilon.
\end{equation}
Denote by $Q$ the set of indices $j=1,2,\cdots,j_0$ for which the corresponding nterval $\Delta_j$ does not contain an endpoint of intervals $\delta_i,i=1,2,\ldots, 2i_0$, i.e. $\Delta_j$ lies in one of intervals $\delta_i,i=1,2,\ldots, 2i_0$. Then the number of indices in $[1,j_0]\setminus Q$ does not exceed $2i_0+1$ and by (\ref{M}),
\begin{equation*}
\sum_{j\in [1,j_0]\setminus Q}\frac {|f(\Delta_j,z_j)|}{\lambda_{M+j}}\leq \varepsilon.
\end{equation*}
Consequently, by (\ref {sum}),
\begin{equation}\label{sumJ}
J:=\sum_{j\in  Q}\frac {|f(\Delta_j,z_j)|}{\lambda_{M+j}}\geq 1-2 \varepsilon.
\end{equation}
Denoting
\begin{equation*}
Q_1=\left\{j\in Q: \Delta_j\subset \bigcup_{i=1}^{i_0}\delta_{2i-1}\right\},\qquad
Q_2=\left\{j\in Q: \Delta_j\subset \bigcup_{i=1}^{i_0}\delta_{2i}\right\}
\end{equation*}
and
\begin{equation*}
J_1:=\sum_{j\in  Q_1}\frac {|f(\Delta_j,z_j)|}{\lambda_{M+j}},\qquad
J_2:=\sum_{j\in  Q_2}\frac {|f(\Delta_j,z_j)|}{\lambda_{M+j}}
\end{equation*}
from (\ref{sumIprime}) and (\ref{sumJ}) we obtain
$$
(I_1^\prime+J_2) + (I_2^\prime+J_1)=I^\prime+J\geq q_0(1-\varepsilon)+1-2 \varepsilon \geq q_0+1 -3\varepsilon.
$$
Thereforore,
$$
I_1^\prime+J_2 \geq \frac {q_0+1-3\varepsilon}2\quad \text{or} \quad (I_2^\prime+J_1) \geq \frac {q_0+1-3\varepsilon}2,
$$
which means that
$$
\Lambda _{N^\prime}^{\# }V_1\left( f\right)\geq \frac {q_0+1-3\varepsilon}2
$$
and hence
$$
\Lambda _{k}^{\# }V_1\left( f\right)\geq \frac {q_0+1}2,
$$
since $\varepsilon$ is any positive number and $N^\prime>k$.
Taking into account that $k$ is an arbitrary natural number, the last inequality implies
\begin{equation*}
\lim\limits_{n\rightarrow \infty }\Lambda _{n}^{\# }V_1\left( f\right) \geq \frac {q_0+1}2>1,
\end{equation*}
which is a contradiction to the assumption (\ref{delta}). Theorem \ref{continous}  is proved.
\end{proof}
It is easy to see, that for any $\gamma>0$ the sequence $\lambda_n=n^\gamma,\ n=1,2,\ldots$ satisfies the condition (\ref{maincond}) with $q=2^\gamma$. Hence Theorem \ref{continous} implies
\begin {corollary}
If  $\, 0<\gamma\leq 1$, then
$\left\{ n^{\gamma}\right\} ^{\#}V=C\left\{ n^{\gamma}\right\} ^{\#}V$.
\end{corollary}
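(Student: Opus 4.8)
The plan is to obtain the corollary as a direct application of Theorem~\ref{continous}: it suffices to take $\Lambda=\{\lambda_n\}$ with $\lambda_n=n^\gamma$ and to check that, for $0<\gamma\le 1$, this sequence satisfies both standing hypotheses of the theorem, namely the conditions collected in (\ref{Lambda}) and the ratio condition (\ref{maincond}). Once both are verified, the theorem yields $\{n^\gamma\}^\#V=C\{n^\gamma\}^\#V$ with no further work.

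First I would dispose of (\ref{maincond}). For $\lambda_n=n^\gamma$ one has $\lambda_{2n}/\lambda_n=(2n)^\gamma/n^\gamma=2^\gamma$ identically in $n$, so the sequence of ratios is constant and $\liminf_{n\to\infty}\lambda_{2n}/\lambda_n=2^\gamma=:q$; since $\gamma>0$ we have $q=2^\gamma>1$, exactly as recorded in the remark preceding the statement. For (\ref{Lambda}) the monotonicity $n^\gamma\uparrow$ and the divergence $n^\gamma\to\infty$ are immediate from $\gamma>0$. The essential point, and the only place the upper bound on $\gamma$ is used, is the divergence $\sum_{n\ge1}\lambda_n^{-1}=\sum_{n\ge1}n^{-\gamma}=+\infty$: this $p$-series diverges precisely when $\gamma\le 1$, which is why the corollary is stated for $0<\gamma\le 1$ and fails to be covered by the theorem once $\gamma>1$. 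I expect this to be the step worth flagging, not because it is hard but because it pins down the admissible range of $\gamma$.

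The only genuine bookkeeping point is the normalization $1<\lambda_1$ appearing in (\ref{Lambda}), which the choice $\lambda_1=1^\gamma=1$ misses at the endpoint. I would resolve this by observing that neither class $\{n^\gamma\}^\#V$ nor $C\{n^\gamma\}^\#V$ is affected by altering a single initial term of $\Lambda$: the finiteness of $\Lambda^\#V(f)$ is insensitive to $\lambda_1$ because $f$ is bounded, and membership in $C\Lambda^\#V$ is a property of the tails $\Lambda_n$ for large $n$; equivalently, one may simply redefine $\lambda_1$ to be any value in $(1,2^\gamma]$ without changing the two classes or disturbing (\ref{maincond}). With (\ref{Lambda}) and (\ref{maincond}) now both in force, Theorem~\ref{continous} applies verbatim and gives the asserted equality. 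I do not anticipate any real obstacle beyond this normalization remark.
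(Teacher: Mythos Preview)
Your proposal is correct and follows the same approach as the paper: verify that $\lambda_n=n^\gamma$ satisfies (\ref{maincond}) with $q=2^\gamma>1$ and then invoke Theorem~\ref{continous}. Your additional remarks on why $\gamma\le 1$ is needed for (\ref{Lambda}) and on the harmless normalization $\lambda_1>1$ are more detailed than the paper's one-line justification, but the argument is the same.
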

This, combined with Theorem \ref{GO10} implies
\begin{corollary}
\label{embedding3}Let $\alpha ,\beta \in \left( 0,1\right) $ and $\alpha
+\beta <1$. Then%
\begin{equation*}\label {contin}
\left\{ i^{1-\left( \alpha +\beta \right) }\right\} ^{\#}V\subset C\left\{
i^{1-\alpha }\right\} \left\{ j^{1-\beta }\right\} V.
\end{equation*}
\end{corollary}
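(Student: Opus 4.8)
The plan is to deduce the asserted embedding by chaining the Corollary immediately preceding it with Theorem \ref{GO10}, so that essentially no new analytic work is needed beyond a hypothesis check; the whole content is a substitution of one proved equality into one proved inclusion.

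First I would set $\gamma := 1-(\alpha+\beta)$ and verify that this exponent lies in the admissible range of the preceding Corollary. Since $\alpha,\beta\in(0,1)$ and $\alpha+\beta<1$, we have $0<\alpha+\beta<1$, hence $0<\gamma<1$, and in particular $0<\gamma\le 1$. As already observed in the text, the power sequence $\lambda_n=n^{\gamma}$ satisfies the growth condition \eqref{maincond} with $q=2^{\gamma}>1$, and the divergence requirement $\sum 1/\lambda_n=+\infty$ in \eqref{Lambda} holds precisely because $\gamma\le 1$. Thus the preceding Corollary applies to the sequence $\{n^{\gamma}\}=\{i^{1-(\alpha+\beta)}\}$.

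Second, the preceding Corollary then gives the equality $\left\{i^{1-(\alpha+\beta)}\right\}^{\#}V=C\left\{i^{1-(\alpha+\beta)}\right\}^{\#}V$, i.e. for this particular power sequence the class and its continuity subclass coincide. Third, I would invoke Theorem \ref{GO10}, which asserts $C\left\{i^{1-(\alpha+\beta)}\right\}^{\#}V\subset C\left\{i^{1-\alpha}\right\}\left\{j^{1-\beta}\right\}V$. Substituting the equality from the previous step into the left-hand side of this inclusion yields at once $\left\{i^{1-(\alpha+\beta)}\right\}^{\#}V\subset C\left\{i^{1-\alpha}\right\}\left\{j^{1-\beta}\right\}V$, which is the claim.

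The only genuine point requiring care — the ``obstacle,'' such as it is — is the hypothesis verification of the first step: one must confirm that $1-(\alpha+\beta)$ lies strictly between $0$ and $1$, so that both the doubling condition \eqref{maincond} and the divergence condition in \eqref{Lambda} are satisfied and the preceding Corollary is licensed. Everything else is a formal composition of two already-established statements; no direct estimates on variations or on Walsh--Fourier series enter here, since that analytic content is already packaged inside Theorem \ref{GO10} and Theorem \ref{continous}.
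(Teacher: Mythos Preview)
Your proposal is correct and follows exactly the route the paper takes: apply the preceding Corollary with $\gamma=1-(\alpha+\beta)\in(0,1]$ to obtain $\{i^{1-(\alpha+\beta)}\}^{\#}V=C\{i^{1-(\alpha+\beta)}\}^{\#}V$, then compose with Theorem~\ref{GO10}. No additional argument is needed.
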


\section{Walsh functions}

Let $\mathbb{P}$ be the set of positive integers, and $\mathbb{N}\mathbf{:=}%
\mathbb{P}\mathbf{\cup \{}0\mathbf{\}.}$ We denote the set of all integers by $\,\,%
\mathbb{Z}$ and the set of dyadic rational numbers in the unit interval $%
I:=[0,1)$ by $\mathbb{Q}$. Each element of $\mathbb{Q}$ is of
the form $\frac{p}{2^{n}}$ for some $p,n\in \mathbb{N},\,\,\,0\leq p\leq
2^{n}$. By a dyadic interval in $I$ we mean an interval of the form $%
I_{N}^{l}:=[l2^{-N},\left( l+1\right) 2^{-N})$ for some $l\in \mathbb{N}%
,0\leq l<2^{N}$. Given $N\in \mathbb{N}$ and $x\in I$, we denote by  $I_{N}\left(
x\right) $ the dyadic interval of length $2^{-N}$ that contains $x$. Finaly, we set  $I_{N}:=[0,2^{-N})$ and $\overline{I}_{N}:=I\backslash
I_{N}.$

Let $r_{0}\left( x\right) $ be the function defined on the real line by
\begin{equation*}
r_{0}\left( x\right) =
\begin{cases}
\ \ 1,&\mbox{ if }x\in [0,1/2) \\
-1,&\mbox{ if }x\in [1/2,1)
\end{cases},
\qquad r_{0}\left( x+1\right) =r_{0}\left( x\right),\quad x\in \mathbb R.
\end{equation*}
The Rademacher system is defined by
\begin{equation*}
r_{n}\left( x\right) =r_{0}\left( 2^{n}x\right) \quad x\in I,\qquad n=1,2,\ldots.
\end{equation*}

The Walsh functions $w_{0},w_{1},...$ are defined as follows. Denote $w_{0}\left(x\right) =1$ and if $\,\,k=2^{n_{1}}+\cdots +2^{n_{s}}\,$is a positive
integer with $n_{1}>n_{2}>\cdots >n_{s}$, then
\begin{equation*}
w_{k}\left( x\right) =r_{n_{1}}\left( x\right) \cdots r_{n_{s}}\left(
x\right) .
\end{equation*}
The Walsh-Dirichlet kernel is defined by
\begin{equation*}
D_{n}\left( x\right) =\sum\limits_{k=0}^{n-1}w_{k}\left( x\right),\quad  n=1,2,\ldots
\end{equation*}
Recall that \cite{GES, SWS}
\begin{equation}
D_{2^{n}}\left( x\right) =
\begin{cases}
2^{n},&\mbox{if }x\in \left[ 0,2^{-n}\right) \\
\, 0,&\mbox{if }x\in \left[ 2^{-n},1\right)
\end{cases}
\end{equation}
and%
\begin{equation}
D_{2^{n}+m}\left( x\right) =D_{2^{n}}\left( x\right) +w_{2^{n}}\left(
x\right) D_{m}\left( x\right) ,\quad 0\leq m<2^{n},\quad n=0, 1,\ldots  \label{dir}
\end{equation}
It is well known that \cite{SWS}
\begin{equation}
D_{n}\left( t\right) =w_{n}\left( t\right) \sum\limits_{j=0}^{\infty
}n_{j}w_{2^{j}}\left( t\right) D_{2^{j}}\left( t\right) , \quad \mbox{if} \quad
n=\sum_{j=0}^{\infty }n_{j}2^{j}  \label{w3}
\end{equation}
and%
\begin{equation}
\left\vert D_{q_{n}}\left( x\right) \right\vert \geq \frac{1}{4x}%
,\qquad 2^{-2n-1}\leq x<1,  \label{lowest}
\end{equation}%
where%
\begin{equation}\label{q_n}
q_{n}:=2^{2n-2}+2^{2n-4}+\cdots +2^{2}+2^{0}.
\end{equation}
Given $x\in I$, the expansion

\begin{equation}
x=\sum\limits_{k=0}^{\infty }x_{k}2^{-(k+1)},  \label{w4}
\end{equation}%
where each $x_{k}=0$ or $1$, is called a dyadic expansion of $x.$ If $%
x\in I\backslash \mathbb{Q}\mathbf{\,}$, then (\ref{w4}) is uniquely
determined. For $x\in \mathbb{Q}$ we choose the dyadic expansion
with $\lim\limits_{k\rightarrow \infty }x_{k}=0$.

The dyadic sum of $x,y\in I$ in terms of the dyadic expansion of $x$ and $y$
is defined by
\begin{equation*}
x\dotplus y=\sum\limits_{k=0}^{\infty }\left\vert x_{k}-y_{k}\right\vert
2^{-(k+1)}.
\end{equation*}
We say that $f\left( x,y\right) $ is continuous at $\left( x,y\right) $ if%
\begin{equation}
\lim\limits_{h,\delta \rightarrow 0}f\left( x\dotplus h,y\dotplus \delta
\right) =f\left( x,y\right) .  \label{cont}
\end{equation}

We consider the double system $\left\{ w_{n}(x)\times w_{m}(y):\,n,m\in
\mathbf{N}\right\} $ on the unit square $I^{2}=\left[ 0,1\right) \times %
\left[ 0,1\right) .$

If $f\in L^{1}\left( I^{2}\right) ,$ then
\begin{equation*}
\hat{f}\left( n,m\right) =\int\limits_{I^{2}}f\left( x,y\right)
w_{n}(x)w_{m}(y)dxdy
\end{equation*}%
is the $\left( n,m\right) $-th Walsh-Fourier coefficient of $f.$

The rectangular partial sums of double Fourier series with respect to the
Walsh system are defined by
\begin{equation*}
S_{M,N}(x,y;f)=\sum\limits_{m=0}^{M-1}\sum\limits_{n=0}^{N-1}\hat{f}\left(
m,n\right) w_{m}(x)w_{n}(y).
\end{equation*}

The Ces\`{a}ro $\left( C;\alpha ,\beta \right) $-means of double
Walsh-Fourier series are defined as follows
\begin{equation*}
\sigma _{n,m}^{\alpha ,\beta }(x,y;f)=\frac{1}{A_{n-1}^{\alpha
}A_{m-1}^{\beta }}\sum\limits_{i=1}^{n}\sum\limits_{j=1}^{m}A_{n-i}^{\alpha
-1}A_{m-j}^{\beta -1}S_{i,j}(x,y;f),
\end{equation*}%
where
\begin{equation*}
A_{0}^{\alpha }=1,\quad A_{n}^{\alpha }=\frac{\left( \alpha +1\right) \cdots
\left( \alpha +n\right) }{n!},\quad \alpha \neq -1,-2,....
\end{equation*}

It is well-known that \cite{Zy}%
\begin{equation}
A_{n}^{\alpha }=\sum\limits_{k=0}^{n}A_{n-k}^{\alpha -1},  \label{f1}
\end{equation}%
\begin{equation}
A_{n}^{\alpha }\sim n^{\alpha }  \label{f2}
\end{equation}%
and%
\begin{equation}
\sigma _{n,m}^{\alpha ,\beta }(x,y;f)=\int\limits_{I^{2}}f\left( s,t\right)
K_{n}^{\alpha }\left( x\dotplus s\right) K_{m}^{\beta }\left( y\dotplus
t\right) dsdt,  \label{int}
\end{equation}%
where%
\begin{equation}\label {kernel}
K_{n}^{\alpha }\left( x\right) :=\frac{1}{A_{n-1}^{\alpha }}%
\sum\limits_{k=1}^{n}A_{n-k}^{\alpha -1}D_{k}\left( x\right) .
\end{equation}

\section{Convergence of two-dimensional Walsh-Fourier series}

The well known Dirichlet-Jordan theorem (see \cite{Zy}) states that the
Fourier series of a function $f(x),\ x\in T$ of bounded variation converges
at every point $x$ to the value $\left[ f\left( x+0\right) +f\left(
x-0\right) \right] /2$.

Hardy \cite{Ha} generalized the Dirichlet-Jordan theorem to the double
Fourier series. He proved that if function $f(x,y)$ has bounded variation in
the sense of Hardy ($f\in BV$), then $S\left[ f\right] $ converges at any
point $\left( x,y\right) $ to the value $\frac{1}{4}\sum f\left( x\pm 0,y\pm
0\right) $. Here and below we consider the convergence of {\bf only rectangular partial sums} of double Fourier series.

Convergence of $d$-dimensional trigonometric Fourier series of functions of
bounded $\Lambda $-variation was investigated in details by Sahakian \cite%
{Saha}, Dyachenko \cite{D1, D2, DW}, Bakhvalov \cite{Bakh1}, Sablin \cite%
{Sab}, Goginava, Sahakian \cite{GogSah,GSGMJ}, ets.

For the $d$-dimensional Walsh-Fourier series the convergence of partial sums
of functions of bounded Harmonic variation and other bounded generalized
variation were studied by Moricz \cite{Mo, Mo2}, Onnewer, Waterman \cite{OW},
Goginava \cite{GoAMH}.

In the two-dimensional case Sargsyan  has obtained the following result.
\begin{theorem}[{Sargsyan \cite%
{Sar}}]\label{ori}
\label{d=2}If $f\in HBV(I^{2})$, then the double Walsh-Fourier series
of $f$ converges to $f\left( x,y\right) $ at any point $\left( x,y\right)
\in I^{2}$, where $f$ is continuous.
\end{theorem}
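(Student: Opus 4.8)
The plan is to start from the kernel representation
\begin{equation*}
S_{M,N}(x,y;f)=\int_{I^{2}}f(s,t)\,D_{M}(x\dotplus s)\,D_{N}(y\dotplus t)\,ds\,dt
\end{equation*}
and to analyze $S_{M,N}(x,y;f)-f(x,y)$ by decomposing, for the fixed point $(x,y)$,
\begin{equation*}
f(s,t)-f(x,y)=\bigl[f(s,t)-f(s,y)-f(x,t)+f(x,y)\bigr]+\bigl[f(s,y)-f(x,y)\bigr]+\bigl[f(x,t)-f(x,y)\bigr],
\end{equation*}
the first bracket being the \emph{mixed difference}. Because $\int_{I}D_{N}=1$, the two one-variable brackets decouple: integrating the product kernel collapses one factor to $1$, and the remaining integral is exactly a one-dimensional Walsh partial sum, so these terms equal $S_{M}(x;f(\cdot,y))-f(x,y)$ and $S_{N}(y;f(x,\cdot))-f(x,y)$. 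The hypothesis $f\in HBV(I^{2})$ makes each section $f(\cdot,y)$, $f(x,\cdot)$ a function of bounded harmonic variation uniformly (this is what $\Lambda V_{1}$ and $\Lambda V_{2}$ with $\lambda_{n}=n$ encode), and taking one increment to be $0$ in (\ref{cont}) shows these sections are continuous at the relevant point; hence I would dispatch the two marginal terms by the one-dimensional Walsh analogue of Waterman's theorem, either cited or established as the base case.

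It then remains to treat the mixed-difference integral, and my first step is the dyadic case. Since $D_{2^{A}}(u)=2^{A}$ for $u\in I_{A}$ and $0$ elsewhere, the dyadic sum $S_{2^{A},2^{B}}(x,y;f)$ is just the mean of $f$ over the dyadic rectangle $I_{A}(x)\times I_{B}(y)$, and by the dyadic continuity (\ref{cont}) of $f$ at $(x,y)$ this mean tends to $f(x,y)$. Thus everything reduces to estimating $S_{M,N}-S_{2^{A},2^{B}}$ for $2^{A}\le M<2^{A+1}$, $2^{B}\le N<2^{B+1}$. For this I would expand both kernels by (\ref{dir}): writing $M=2^{A}+M^{\prime}$ and $N=2^{B}+N^{\prime}$,
\begin{align*}
D_{M}(u)D_{N}(v)=&\,D_{2^{A}}(u)D_{2^{B}}(v)+D_{2^{A}}(u)w_{2^{B}}(v)D_{N^{\prime}}(v)\\
&+w_{2^{A}}(u)D_{M^{\prime}}(u)D_{2^{B}}(v)+w_{2^{A}}(u)D_{M^{\prime}}(u)w_{2^{B}}(v)D_{N^{\prime}}(v).
\end{align*}
The first summand is the dyadic sum already handled. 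In each of the two middle (half-dyadic) summands the dyadic factor averages $f$ over $I_{A}(x)$, respectively $I_{B}(y)$, and what survives is a one-dimensional Walsh partial-sum difference of a section twisted by a single Rademacher factor $w_{2^{A}}$ or $w_{2^{B}}$; since such a twist is again a difference of two Dirichlet kernels, I would control these by the same one-dimensional machinery, using that the averaged section keeps bounded harmonic variation (uniformly in the averaging scale, by $\Lambda V_{1}$, $\Lambda V_{2}$).

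The genuinely two-dimensional summand $w_{2^{A}}(u)D_{M^{\prime}}(u)\,w_{2^{B}}(v)D_{N^{\prime}}(v)$ is the heart of the matter, and here the mixed component $\left(\Lambda^{1}\Lambda^{2}\right)V_{1,2}$ of the $HBV(I^{2})$ norm (with $\lambda_{n}=n$) is essential. I would use (\ref{w3}) to write each Dirichlet factor as a signed combination of the block kernels $D_{2^{j}}$, which localizes the integral to products of dyadic rings $\{2^{-k-1}\le x\dotplus s<2^{-k}\}\times\{2^{-l-1}\le y\dotplus t<2^{-l}\}$; on such a ring the kernel has size comparable to $2^{k}2^{l}$ while the area is of order $2^{-k-l}$. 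Performing a summation by parts in each variable against the mixed difference, the surviving weights indexed by the dyadic blocks at the finest relevant scale are of harmonic type $1/\nu$, and the choice $\lambda_{n}=n$ is precisely what pairs these against the mixed $\Lambda$-variation, so the total is dominated by a harmonically weighted double sum of ring-wise mixed variations, which is finite by the $HBV(I^{2})$ hypothesis; continuity of $f$ at $(x,y)$ then forces the large-$(k,l)$ tail to vanish as $A,B\to\infty$.

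I expect this two-dimensional Abel summation to be the main obstacle. One must organize the double summation so that the harmonic weights simultaneously absorb the $2^{k}$ and $2^{l}$ growth of the two block kernels, and must separate the regimes where one variable already lies inside its dyadic neighborhood (so that the gain comes from continuity of the corresponding section) from the regime where both are far, where the genuine mixed variation must do the work; bookkeeping the cross terms so that neither the continuity gain nor the harmonic matching is lost is the delicate point. That the harmonic scale is exactly critical here — and therefore cannot be relaxed to a sparser weight — is reflected in the lower bound (\ref{lowest}).
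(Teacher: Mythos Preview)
The paper does not prove this theorem: it is quoted as a result of Sargsyan with a citation to \cite{Sar} and then invoked as a black box in the proof of Theorem~\ref{conv}(a). There is therefore no argument in the paper against which to compare your proposal.

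That said, your outline is the standard route one expects for such a result, and is close in spirit to Sahakian's trigonometric argument \cite{Saha} that Sargsyan adapts to the Walsh setting: split off the two marginal differences and dispatch them by the one-dimensional Waterman-type theorem for Walsh series, reduce to dyadic partial sums via the recursion~(\ref{dir}), and control the genuinely mixed piece by pairing the block expansion~(\ref{w3}) of each Dirichlet factor against the double harmonic variation $(\Lambda^{1}\Lambda^{2})V_{1,2}$. Your identification of the two-variable Abel summation as the crux, and of the need to separate the regime where one variable is already inside its dyadic neighborhood from the regime where both are far, is exactly right. The only place where the sketch is not yet an argument is the assertion that ``the surviving weights \ldots\ are of harmonic type $1/\nu$'': you would need to make explicit how, after the double summation by parts on the dyadic rings, the contribution is dominated by a sum of the form $\sum_{i,j}|f(\Delta_{i},J_{j})|/(ij)$ over a system of nonoverlapping rectangles, with the tail forced to zero by continuity at $(x,y)$. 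That bookkeeping is genuine work, but your plan points at the correct mechanism.
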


The authors investigated
convergence of multiple Walsh-Fourier series of functions of  partial $\Lambda $-bounded variation. In particular, the following result was proved.
\begin{theorem}[Goginava, Sahakian \cite{GS2012}]
\label{c3} a) If $f\in P\{\frac{n}{\log ^{1+\varepsilon }n}\}BV(I^{2})$ for
some $\varepsilon >0$, then the double Walsh-Fourier series of $f$
 converges to $f\left( x,y\right) $ at any point $\left( x,y\right) $%
, where $f$ is continuous.

b) There exists a continuous function $f\in P\{\frac{n}{\log n}%
\}BV(I^{2})$ such that the quadratic partial sums of its
Walsh-Fourier series diverge at some point.
\end{theorem}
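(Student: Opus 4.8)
The plan is to treat the two parts separately, starting in both cases from the Walsh--Dirichlet kernel representation
\[
S_{M,N}(x_0,y_0;f)=\int_{I^2}f(x_0\dotplus u,\,y_0\dotplus v)\,D_M(u)\,D_N(v)\,du\,dv,
\]
obtained after the dyadic translations $u=x_0\dotplus s$, $v=y_0\dotplus t$. For part a), fix a point $(x_0,y_0)$ of continuity. Since constants have convergent Walsh series and do not change the partial variation, I may assume $f(x_0,y_0)=0$, so that $f(x_0\dotplus u,y_0\dotplus v)\to0$ as $u,v\to0$. Writing $2^m\le M<2^{m+1}$ and $2^n\le N<2^{n+1}$, I split $I=I_m\cup\overline I_m$ in the variable $u$ and $I=I_n\cup\overline I_n$ in the variable $v$, decomposing the integral into four regions according to whether each variable is dyadically near $0$ (i.e. near the point) or far from it.

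On the local block $I_m\times I_n$ the leading part of the kernel, $D_{2^m}(u)D_{2^n}(v)$, equals $2^{m+n}$ there and vanishes off it, so this contribution is the dyadic average of $f(x_0\dotplus\cdot,y_0\dotplus\cdot)$ over a shrinking square about the origin; by continuity and $f(x_0,y_0)=0$ it tends to $0$, while the lower-order corrections are absorbed into the remaining estimates. On each of the three non-local regions at least one variable lies in a far set, where the kernel oscillates and must be handled by Abel summation. Using the block expansion (\ref{w3}) together with the localization of the $D_{2^j}$, a summation by parts in the far variable converts the integral into a sum over the $O(\log N)$ dyadic scales of the kernel, whose increments are controlled by the partial variations $\Lambda V_1(f)$, $\Lambda V_2(f)$; the resulting series converges or not precisely according to the size of $\lambda_n$. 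I expect the genuinely two-dimensional far--far region to be the main obstacle: partial variation only bounds oscillation in one variable at a frozen value of the other, so one must summation-by-parts first in $u$ (at fixed $v$, using $\Lambda V_1$), then control the $\Lambda$-variation in $v$ of the intermediate function. The first step inflates the effective variation by a logarithmic factor, of the order of the Walsh--Lebesgue constants $\sim\log N$; this is exactly why the one-dimensional threshold $\lambda_n=n$ degrades in two dimensions, and the extra power in $\lambda_n=n/\log^{1+\varepsilon}n$ (as against the borderline $\log^{1}$) is what renders the critical logarithmic series convergent and forces the three non-local contributions to $0$ as $M,N\to\infty$.

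For part b) the aim is to show this threshold is sharp. Note first that, since $\lambda_n=n/\log n\le n$ enlarges each term $|f(\Delta_i,\cdot)|/\lambda_i$, one has the inclusion $\{n/\log n\}BV\subset HBV$; as one-dimensional functions of bounded harmonic variation have convergent Walsh--Fourier series, the divergence cannot come from a product function and must exploit a genuinely two-dimensional, diagonal coupling of the variables that partial variation does not detect. The engine is the lower bound (\ref{lowest}), namely $|D_{q_k}(x)|\ge 1/(4x)$ for $2^{-2k-1}\le x<1$ with $q_k$ the index (\ref{q_n}), which keeps the kernel large on a macroscopic set. I would construct $f=\sum_k a_k\chi_{E_k}$ as a superposition of signed indicators of unions of dyadic rectangles $E_k$ arranged in a staircase pattern tied to the scale $q_k$, choosing the amplitudes $a_k$ and the geometry so that: each horizontal and each vertical line meets the pattern in few pieces, keeping $\Lambda V_1$ and $\Lambda V_2$ finite for $\lambda_n=n/\log n$; the series is uniformly small in the dyadic sense, giving continuity in the sense (\ref{cont}); and at a fixed diagonal point the square sum $S_{q_k,q_k}$ correlates simultaneously with many pieces of $E_k$ through the product kernel $D_{q_k}(u)D_{q_k}(v)$, so that by (\ref{lowest}) it stays bounded away from the limit along the subsequence $q_k$.

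The delicate point in part b) is that the class is exactly borderline---the pure $\log$, with no $\varepsilon$ to spare---so the two requirements are in direct competition: the amplitudes must be as large as finiteness of the $\{n/\log n\}$-variation permits, yet large enough that the kernel lower bound accumulates to a non-vanishing amount along the $q_k$. Verifying the exact variation bound for the chosen blocks---so that the suprema defining $\Lambda V_1,\Lambda V_2$ are finite for $\lambda_n=n/\log n$ but would fail for any faster-growing weight---is the crux, and is precisely what pins the construction to the diagonal indices $q_k$ of (\ref{q_n}), for which the Dirichlet kernel is simultaneously large and dyadically structured.
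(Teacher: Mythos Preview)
This theorem is \emph{not} proved in the present paper: it is quoted from \cite{GS2012} as background, with no argument given here. So there is no ``paper's own proof'' to compare against directly; the closest analogue in this paper is the proof of Theorem~\ref{conv}, which has the same logical shape (a convergence half and a sharpness half) for the class $\Lambda^{\#}BV$.

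With that caveat, your sketch for part~a) has the right architecture---the four-block decomposition and Abel summation against the dyadic expansion~(\ref{w3}) are standard---but the treatment of the far--far block is only a heuristic. You say one should ``summation-by-parts first in $u$ (at fixed $v$, using $\Lambda V_1$), then control the $\Lambda$-variation in $v$ of the intermediate function,'' and that the first step costs a factor $\log N$. That is exactly the step that needs to be written out: the intermediate function depends on $v$ through $f(x_0\dotplus\cdot,y_0\dotplus v)$ evaluated at many $u$-scales, and you have not shown that its $\Lambda$-variation in $v$ is uniformly controlled by $\Lambda V_2(f)$ (indeed it is not, in general, for $P\Lambda BV$ functions---this is where the mixed variation is missing). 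The actual argument in \cite{GS2012} has to organize this double sum carefully so that the two one-variable variations suffice; your outline does not show how that is done, and as written the far--far estimate is a gap rather than a proof.

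For part~b) your plan is an explicit construction $f=\sum_k a_k\chi_{E_k}$. This can be made to work, but it is harder than necessary at the borderline weight, precisely for the reason you identify: the variation budget and the divergence budget are in exact competition. The paper's approach to the analogous sharpness statement (Theorem~\ref{conv}\,b)) avoids this tension entirely by using the Banach--Steinhaus theorem: one builds, for each $N$, a \emph{single} continuous test function $G_N$ (a tent function times $\mathrm{sgn}\,D_{q_N}$ in each variable) with $\|G_N\|_{\Lambda^{\#}BV}$ bounded but $|S_{q_N,q_N}(0,0;G_N)|\to\infty$, and then the uniform boundedness principle produces the desired $f$ without ever assembling an infinite series by hand. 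If you pursue your direct construction you will need to verify the $P\{n/\log n\}BV$ bound for the full superposition, which requires controlling how the blocks at different scales interact on a single line; the Banach--Steinhaus route sidesteps that entirely and is what the original paper \cite{GS2012} almost certainly does as well.
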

In the next theorem we obtain a similar result for functions of bounded
$\Lambda^\#$-variation.
\begin{theorem}
\label{conv}a)
If $f\in \left\{ \frac{n}{\log n}\right\}^\# BV$, then
the double Walsh-Fourier series of $f$ converges to $f\left(
x,y\right) $ at any point $\left( x,y\right) $, where $f$ is continuous.%
\newline\indent
b) For an arbitrary sequence $\alpha _{n}\rightarrow \infty $  there
exists a continuous function $f\in \left\{ \frac{n\alpha _{n}}{\log \left(
n+1\right) }\right\} ^{\#}BV$ such that
 the quadratic partial sums of its
Walsh-Fourier series diverge unboundedly at $(0,0)$.
\end{theorem}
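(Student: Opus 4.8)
For (a) the plan is simply to chain two results already quoted. By Theorem~\ref{embedding1} we have $\left\{n/\log n\right\}^{\#}BV\subset HBV$, and Sargsyan's Theorem~\ref{d=2} guarantees that the double Walsh--Fourier series of any $HBV$ function converges to its value at every point of continuity; composing these gives (a) with no further work.

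All the content is in (b), and the plan is to superpose \emph{diagonal tensor blocks} resonating with the kernels $D_{q_N}$ of (\ref{q_n}). For $N=1,2,\dots$ I would put
\[
b_N(x,y):=\operatorname{sgn}D_{q_N}(x)\cdot\operatorname{sgn}D_{q_N}(y).
\]
Because $D_{q_N}$ is a Walsh polynomial depending only on the coordinates $t_0,\dots,t_{2N-2}$, each factor $\operatorname{sgn}D_{q_N}$ is constant on every dyadic interval of length $2^{-(2N-1)}$; thus $b_N$ is a dyadic step function, $\|b_N\|_\infty=1$, and $b_N$ is continuous in the sense of (\ref{cont}). Since $S_{M,N}(0,0;f)=\int_{I^2}f(s,t)D_M(s)D_N(t)\,ds\,dt$, evaluation at the origin factorises,
\[
S_{q_N,q_N}(0,0;b_N)=\Bigl(\int_0^1|D_{q_N}(t)|\,dt\Bigr)^2,
\]
and the lower bound (\ref{lowest}) gives $\int_0^1|D_{q_N}|\ge\frac14\int_{2^{-2N-1}}^1\frac{dt}{t}\gtrsim N$, so $S_{q_N,q_N}(0,0;b_N)\gtrsim N^2$.

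The decisive step is the matching bound on the $\#$-variation. Writing $\mu(I):=\sup_y|b_N(I,y)|$ one checks $\mu(I)\in\{0,2\}$, with $\mu(I)=2$ only when $\operatorname{sgn}D_{q_N}$ differs at the endpoints of $I$; as $\operatorname{sgn}D_{q_N}$ has at most $2^{2N}$ intervals of constancy, any nonoverlapping family contains at most $2^{2N}$ intervals with $\mu=2$. After the harmless reduction to $\alpha_n\nearrow\infty$ (replace $\alpha_n$ by $\min_{m\ge n}\alpha_m$, which only shrinks the class), $\lambda_i=i\alpha_i/\log(i+1)$ is increasing, so placing these intervals against the smallest weights gives, for $\Lambda=\{n\alpha_n/\log(n+1)\}$,
\[
\Lambda^{\#}V(b_N)\le 4\sum_{i=1}^{2^{2N}}\frac{\log(i+1)}{i\,\alpha_i}=:4\beta\bigl(2^{2N}\bigr).
\]
Here $\alpha_n\to\infty$ is essential: an $\varepsilon$--splitting of the sum shows $\beta(M)=o\bigl((\log M)^2\bigr)$, whence $\Lambda^{\#}V(b_N)=o(N^2)$, i.e.\ the variation is of \emph{smaller} order than the quadratic sum $\gtrsim N^2$. (For $\alpha\equiv1$ one only gets $\beta(2^{2N})\asymp N^2$, matching the lower bound and agreeing with part (a); this is also why a single tensor $g\otimes g$ cannot be used, since it would force $g$ into the one-dimensional Waterman class $HBV$ and hence converge, so a genuinely two-dimensional superposition is unavoidable.)

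Finally I would assemble $f$. Choosing $n_k\uparrow\infty$ rapidly and $c_k>0$ with $c_kn_k^2\to\infty$, $\sum_k c_k<\infty$ and $\sum_k c_k\,\Lambda^{\#}V(b_{n_k})<\infty$ — all compatible because $\Lambda^{\#}V(b_{n_k})/n_k^2\to0$ — set $f:=\sum_k c_k b_{n_k}$. Uniform convergence makes $f$ continuous, subadditivity of $\Lambda^{\#}V$ places $f\in\{n\alpha_n/\log(n+1)\}^{\#}BV$, and, since each tensor contributes a nonnegative square,
\[
S_{q_{n_K},q_{n_K}}(0,0;f)=\sum_k c_k\Bigl(\int_0^1\operatorname{sgn}D_{q_{n_k}}(t)\,D_{q_{n_K}}(t)\,dt\Bigr)^2\ge c_K\,S_{q_{n_K},q_{n_K}}(0,0;b_{n_K})\gtrsim c_Kn_K^2\to\infty,
\]
so the quadratic sums diverge unboundedly at $(0,0)$. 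The single genuine obstacle is the variation bound $\Lambda^{\#}V(b_N)=o(N^2)$: it is exactly where the weight $\log(n+1)/(n\alpha_n)$ and the growth of $\alpha_n$ enter, and it is what simultaneously forces the divergence in (b) and respects the sharp positive statement (a).
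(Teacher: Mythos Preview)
Your proof of (a) is exactly the paper's: combine Theorem~\ref{embedding1} with Sargsyan's Theorem~\ref{ori}.

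For (b) the core mechanism is the same as the paper's --- resonate with $D_{q_N}$, show $S_{q_N,q_N}(0,0;\cdot)\gtrsim N^2$ while $\Lambda^{\#}V(\cdot)=o(N^2)$ via the sum $\sum_{i\le 2^{2N}}\log(i+1)/(i\alpha_i)$ --- but you package it differently. Instead of applying Banach--Steinhaus on the Banach space $C(I^2)\cap\Lambda^{\#}BV$ (as the paper does), you build $f$ explicitly as a lacunary sum $\sum_k c_k b_{n_k}$, exploiting the pleasant fact that each cross term $S_{q_{n_K},q_{n_K}}(0,0;b_{n_k})$ is a nonnegative square and can simply be dropped. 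That avoids the abstract functional-analytic step and gives a concrete $f$; the paper's route is shorter but non-constructive.

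There is, however, a real gap. Your blocks $b_N=\operatorname{sgn}D_{q_N}\otimes\operatorname{sgn}D_{q_N}$ are step functions: they are continuous only in the dyadic sense~(\ref{cont}), not in the classical sense, and a uniform limit of discontinuous functions need not be continuous, so your sentence ``uniform convergence makes $f$ continuous'' does not deliver a classically continuous $f$ --- it will jump at dyadic rationals. The paper's construction differs from yours precisely here: it multiplies $\operatorname{sgn}D_{q_N}$ by the piecewise-linear tent $\varphi_N$ of~(\ref{phi_N}), which vanishes at every dyadic point $j2^{-2N}$ and hence kills the jumps of $\operatorname{sgn}D_{q_N}$, making the test functions $g_N$ genuinely continuous while only halving the relevant integrals. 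Your argument is easily repaired by the same device (and then either your gliding-hump sum or the paper's Banach--Steinhaus finishes the job), but as written it does not produce the promised \emph{continuous} $f$.
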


\begin{proof}
Part (a) immediately follows from Theorems \ref{embedding1} and  \ref{ori}.

To
prove part (b) observe  that  for any sequence $\Lambda =\{\lambda _{n}\}$
satisfying (\ref{Lambda}) the class $C\left( I^{2}\right) \bigcap \Lambda ^{\#}BV$ is a
Banach space with the norm
\begin{equation*}
\left\Vert f\right\Vert _{\Lambda ^{\#}BV}:=\left\Vert f\right\Vert
_{C}+\Lambda ^{\#}BV\left( f\right) ,
\end{equation*}
and $S_{N,N}(0,0,f)$, $n=1,2,\ldots$, is a sequence of bounded linear functionals on that space.
Denote
\begin{eqnarray}\label{phi_N}
\hskip-3mm \varphi _{N,j}\left( x\right) \hskip -2mm &=&\hskip -2mm 
\begin{cases}
\ 2^{2N+1}x-2j, &\text{if }x\in \left[ j2^{-2N},\left( 2j+1\right) 2^{-2N-1}%
\right]  \nonumber \\
-\left( 2^{2N+1}x-2j-2\right), &\text{if }x\in \left[ \left( 2j+1\right)
2^{-2N-1},\left( j+1\right) 2^{-2N}\right]  \\
\ 0, &\text{if }x\in I\backslash \left[ j2^{-2N},\left( j+1\right) 2^{2N}\right]
\end{cases},\\
&&\hskip-3mm \varphi _{N}\left( x\right) =\sum\limits_{j=1}^{2^{2N}-1}\varphi
_{N,j}\left( x\right) ,\qquad x\in I,\\
 g_{N}\left( x,y\right) \hskip -2mm &=&\hskip -2mm \varphi _{N}\left( x\right) \varphi _{N}\left(
y\right) \text{sgn}D_{q_{N}}\left( x\right) \text{sgn}D_{q_{N}}\left(
y\right),\qquad x,y\in I, \nonumber 
\end{eqnarray}
where $q_N$ is defined in (\ref{q_n}).

Suppose $\Lambda= \left\{\lambda_n= \frac{n\alpha_{n}}{\log \left( n+1\right) }\right\}_{n=1}^\infty$, where  $\alpha _{n}\rightarrow \infty $. It is easy to show that
$$
\Lambda^{\#}V_{s}\left(
g_{N}\right)
\leq c\sum\limits_{i=1}^{2^{2N}-1}\frac{\log \left( i+1\right) }{i\alpha _{i}}
\\
=o\left( N^{2}\right) \text{ as \ }N\rightarrow \infty ,
$$
for $s=1,2$. Hence
\begin{equation*}
\left\Vert g_{N}\right\Vert _{\Lambda ^{\#}BV}=o\left( N^{2}\right) =\eta
_{N}N^{2},
\end{equation*}%
where $\eta _{N}\to 0 $ as \ $N\rightarrow \infty $, and denoting
\begin{equation*}
G_{N}:=\frac{g_{N}}{\eta _{N}N^{2}},
\end{equation*}%
we conclude that $G_{N}\in \Lambda
^{\#}BV$ and
\begin{equation}\label{GN}
\sup\limits_{N}\left\Vert G_{N}\right\Vert _{\Lambda ^{\#}BV}<\infty .
\end{equation}

By construction of the function $G_{N}$  we have
\begin{eqnarray}\label{III}
S_{q_{N},q_{N}}\left( 0,0;G_{N}\right)
&=&
\iint\limits_{I^{2}}G_{N}\left( x,y\right) D_{q_{N}}\left( x\right)
D_{q_{N}}\left( y\right) dxdy\nonumber\\
&=&
\frac{1}{N^{2}\eta _{N}}\iint\limits_{I^{2}}\varphi _{N}\left( x\right)
\varphi _{N}\left( y\right) \left\vert D_{q_{N}}\left( x\right) \right\vert
\left\vert D_{q_{N}}\left( y\right) \right\vert dxdy\\
&=&
\frac{1}{N^{2}\eta _{N}}\left( \int\limits_{I}\varphi _{N}\left( x\right)
\left\vert D_{q_{N}}\left( x\right) \right\vert dx\right) ^{2}\nonumber
\end{eqnarray}
Using (\ref{lowest}) we can write
\begin{eqnarray*}
\int\limits_{I}\varphi _{N}\left( x\right) \left\vert D_{m_{N}}\left(
x\right) \right\vert dx
&=&\sum\limits_{j=1}^{2^{2N}-1}\int\limits_{j2^{-2N}}^{\left( j+1\right)
2^{-2N}}\varphi _{N,j}\left( x\right) \left\vert D_{m_{N}}\left( x\right)
\right\vert dx
\\
&=&\sum\limits_{j=1}^{2^{2N}-1}\left\vert D_{m_{N}}\left( \frac{j}{2^{2N}}%
\right) \right\vert \int\limits_{j2^{-2N}}^{\left( j+1\right)
2^{-2N}}\varphi _{N,j}\left( x\right) dx
\\
&\geq&
 \frac{1}{2^{2N+1}}\sum\limits_{j=1}^{2^{2N}-1}\frac{2^{2N}}{4j}\geq cN.
\end{eqnarray*}%
Consequently, from (\ref{III}) we obtain
\begin{equation}
\left\vert S_{q_{N},q_{N}}\left( 0,0;G_{N}\right) \right\vert \geq \frac{c}{%
\eta _{N}}\rightarrow \infty\quad  \mbox{as}\quad  N\rightarrow \infty. \label{BS}
\end{equation}%
According to the Banach-Steinhaus Theorem, (\ref{GN}) and (\ref{BS}) imply that there
exists a continuous function $f\in \left\{ \frac{n\alpha _{n}}{\log \left(
n+1\right) }\right\} ^{\#}BV$ such that
\begin{equation*}
\sup\limits_{N}\left\vert S_{N,N}\left( 0,0;f\right) \right\vert =+\infty .
\end{equation*}
Theorem \ref{conv} is proved.
\end{proof}

Theorem \ref{embedding2} and Theorem \ref{conv} imply

\begin{theorem}
Let the function $f\left( x,y\right)$, $(x,y)\in I^2$, satisfies the condition
\begin{equation*}
\sum\limits_{n=1}^{\infty }\frac{v_{s}^{\#}\left( f,n\right) \log \left(
n+1\right) }{n^{2}}<\infty ,~\ \ s=1,2.
\end{equation*}%
Then the double Walsh-Fourier series of $f$ converges to $%
f\left( x,y\right) $ at any point $\left( x,y\right) $, where $f$ is
continuous.
\end{theorem}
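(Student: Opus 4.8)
The plan is to obtain the statement as an immediate composition of two results already proved above, namely the embedding Theorem~\ref{embedding2} and the convergence Theorem~\ref{conv}(a). The summability hypothesis
\[
\sum_{n=1}^{\infty }\frac{v_{s}^{\#}\left( f,n\right) \log \left( n+1\right) }{n^{2}}<\infty ,\qquad s=1,2,
\]
is verbatim the hypothesis of Theorem~\ref{embedding2}, so the first step is simply to invoke that theorem and conclude that $f\in \left\{ \frac{n}{\log \left( n+1\right) }\right\}^{\#}BV$.

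The second step is to reconcile the two weight sequences, since Theorem~\ref{conv}(a) is stated for $\left\{ \frac{n}{\log n}\right\}^{\#}BV$ while the embedding delivers $\left\{ \frac{n}{\log \left( n+1\right) }\right\}^{\#}BV$. I would verify that these two classes coincide. Writing $\lambda _{n}=n/\log \left( n+1\right) $ and $\mu _{n}=n/\log n$ (with $\mu _{1}$ redefined to be any number exceeding $1$ so that (\ref{Lambda}) holds), the relation $\log \left( n+1\right) /\log n\rightarrow 1$ shows that the ratio $\lambda _{n}/\mu _{n}$ is bounded above and below by positive constants for $n\geq 2$. Since each seminorm $\Lambda^{\#}V_{s}\left( f\right) $ depends on the weight only through the reciprocals $1/\lambda _{i}$ that enter the defining sums linearly, a two-sided comparison of the weights yields a two-sided comparison of $\left\{ \lambda _{n}\right\}^{\#}V_{s}$ and $\left\{ \mu _{n}\right\}^{\#}V_{s}$; in particular one is finite precisely when the other is, whence $\left\{ \frac{n}{\log \left( n+1\right) }\right\}^{\#}BV=\left\{ \frac{n}{\log n}\right\}^{\#}BV$.

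The final step is to apply Theorem~\ref{conv}(a) to $f\in \left\{ \frac{n}{\log n}\right\}^{\#}BV$, which gives convergence of the double Walsh--Fourier series to $f\left( x,y\right) $ at every point where $f$ is continuous, as required. I do not expect a genuine obstacle: the argument is a short chaining of cited results, and the only point deserving care is the weight-comparison step, which is routine because replacing a $\Lambda $-weight by a uniformly comparable one alters each $\Lambda^{\#}$-seminorm by at most bounded factors and hence preserves the class.
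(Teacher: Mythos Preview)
Your proposal is correct and follows exactly the paper's approach: the paper states the theorem as an immediate consequence of Theorem~\ref{embedding2} and Theorem~\ref{conv}, without further argument. Your additional care in reconciling the weights $n/\log(n+1)$ and $n/\log n$ is a minor clarification the paper leaves implicit, but it is routine and changes nothing in the logic.
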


\section{  Ces\'aro means of negative order two-dimensional
Walsh-Fourier series}

The problems of summability of  Ces\'aro means of negative order for one
dimensional Walsh-Fourier series were studied in the works \cite{Tev}, \cite%
{GoJAT2}. In the two-dimensional case the summability of  Walsh-Fourier series by Ces\'aro metod of negative order for functions of partial bounded variation was
investigated by the first author author in \cite{GoJAT}, \cite{GoUMJ}. In
particular, the following results were obtained.

\begin{theorem}[{Goginava} \protect\cite{GoJAT}]
Let $f\in C_w\left( I^{2}\right) \cap PBV$ and $\alpha +\beta <1,\ \alpha
,\beta >0.$ Then the double Walsh-Fourier series of the function $f$ is
uniformly $(C;-\alpha ,-\beta )$ summable in the sense of Pringsheim.
\end{theorem}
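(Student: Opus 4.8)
The plan is to work from the product–kernel representation. Replacing $\alpha,\beta$ by $-\alpha,-\beta$ in (\ref{int}) gives
\[
\sigma_{n,m}^{-\alpha,-\beta}(x,y;f)=\int_{I^{2}}f(s,t)\,K_{n}^{-\alpha}(x\dotplus s)\,K_{m}^{-\beta}(y\dotplus t)\,ds\,dt ,
\]
with $K_{n}^{-\alpha}$ as in (\ref{kernel}). Since $\int_{I}D_{k}=1$ for every $k\ge 1$, the identity (\ref{f1}) yields $\int_{I}K_{n}^{-\alpha}=1$, so the product kernel also has integral $1$. After the measure–preserving substitution $u=x\dotplus s,\ v=y\dotplus t$ this lets me subtract $f(x,y)$ and write
\[
\sigma_{n,m}^{-\alpha,-\beta}(x,y;f)-f(x,y)=\int_{I^{2}}\bigl[f(x\dotplus u,y\dotplus v)-f(x,y)\bigr]K_{n}^{-\alpha}(u)K_{m}^{-\beta}(v)\,du\,dv .
\]
Everything reduces to estimating this double integral uniformly in $(x,y)$ as $n,m\to\infty$ independently.

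Next I would telescope. Writing $\Phi_{n}(x,w):=\sigma_{n}^{-\alpha}(x;f(\cdot,w))-f(x,w)$ and $\Psi_{m}(x,y):=\sigma_{m}^{-\beta}(y;f(x,\cdot))-f(x,y)$, and splitting the increment into the mixed second difference $f(x\dotplus u,y\dotplus v)-f(x\dotplus u,y)-f(x,y\dotplus v)+f(x,y)$ plus the two single–variable differences, the fact that each one–dimensional kernel integrates to $1$ collapses one integration in each single–variable term and produces, after a short computation, the clean decomposition
\[
\sigma_{n,m}^{-\alpha,-\beta}(x,y;f)-f(x,y)=\Psi_{m}(x,y)+\sigma_{m}^{-\beta}\bigl(y;\Phi_{n}(x,\cdot)\bigr).
\]
The first summand is the easy one: because $f\in PBV$, the section $f(x,\cdot)$ is of bounded variation uniformly in $x$ and, by $f\in C_{w}(I^{2})$ (see (\ref{cont})), dyadically uniformly continuous; hence the one–dimensional $(C,-\beta)$ summability of Walsh–Fourier series of functions of bounded variation (the results of \cite{Tev,GoJAT2}) gives $\Psi_{m}(x,y)\to 0$ uniformly in $(x,y)$ as $m\to\infty$. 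For the same reason $\Phi_{n}(x,w)\to 0$ uniformly in $(x,w)$ as $n\to\infty$.

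The heart of the matter, and the main obstacle, is the cross term $\sigma_{m}^{-\beta}(y;\Phi_{n}(x,\cdot))$. One cannot simply combine the two facts above: although $\|\Phi_{n}\|_{\infty}\to 0$, the negative–order means $\sigma_{m}^{-\beta}$ are not uniformly bounded on $C(I)$, and the total variation of $w\mapsto\Phi_{n}(x,w)$ is $\lesssim\|K_{n}^{-\alpha}\|_{L^{1}}\,V_{2}(f)$, which blows up as $n\to\infty$; moreover the class $PBV$ controls only the partial variations $V_{1},V_{2}$ and \emph{not} the Hardy (mixed) variation, so the mixed difference cannot be paired with a finite two–dimensional Stieltjes measure. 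I would therefore estimate $\sigma_{m}^{-\beta}(y;\Phi_{n}(x,\cdot))=\int_{I}\Phi_{n}(x,y\dotplus v)K_{m}^{-\beta}(v)\,dv$ directly as a two–dimensional object: decompose $I$ in each variable into the dyadic rings $\overline{I}_{N}\setminus\overline{I}_{N+1}$, estimate $K_{n}^{-\alpha}$ and $K_{m}^{-\beta}$ on each ring by means of the structure formulas (\ref{dir}), (\ref{w3}), the lower bound (\ref{lowest}) and the asymptotics (\ref{f2}), use the dyadic uniform continuity of $f$ to absorb the rings near the origin, and perform Abel summation by parts in a single variable at a time — legitimate since, for fixed $v$, the variation in $u$ of $f(x\dotplus u,y\dotplus v)-f(x\dotplus u,y)$ is at most $2V_{1}(f)$, and symmetrically — so that the kernels are replaced by their bounded dyadic partial sums and paired with $V_{1}$ and $V_{2}$ only. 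The hypothesis $\alpha+\beta<1$ enters decisively at the end: the resulting double series over the dyadic scales has general term of order $2^{-N(1-\alpha-\beta)}$ and converges precisely when $\alpha+\beta<1$. Since every estimate is uniform in $(x,y)$ and in the separate indices, it follows that $\sup_{x,y}\bigl|\sigma_{n,m}^{-\alpha,-\beta}(x,y;f)-f(x,y)\bigr|\to 0$ as $n,m\to\infty$ independently, which is the asserted uniform Pringsheim summability.
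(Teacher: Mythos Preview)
This theorem is not proved in the present paper: it is quoted from \cite{GoJAT} as a background result, with no argument given here. There is therefore no proof in this paper to compare your proposal against.

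On the substance of your sketch: the telescoping identity
\[
\sigma_{n,m}^{-\alpha,-\beta}(x,y;f)-f(x,y)=\Psi_{m}(x,y)+\sigma_{m}^{-\beta}\bigl(y;\Phi_{n}(x,\cdot)\bigr)
\]
is correct, and you diagnose the genuine obstacle accurately --- the cross term cannot be handled by iterating one–variable results, since $\sigma_{m}^{-\beta}$ is unbounded on $C(I)$ while the $w$–variation of $\Phi_{n}(x,\cdot)$ is only controlled by $\|K_{n}^{-\alpha}\|_{L^{1}}V_{2}(f)$, which grows like $n^{\alpha}$. However, your resolution of that obstacle is a programme rather than a proof. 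The assertion that ``the resulting double series over the dyadic scales has general term of order $2^{-N(1-\alpha-\beta)}$'' is essentially the whole theorem; getting there requires the detailed pointwise estimates of $K_{n}^{-\alpha}$ on each dyadic ring (not merely the $L^{1}$ bound from \cite{Tev}, and certainly not the lower bound (\ref{lowest}), which goes the wrong way) together with a precise accounting of how an Abel summation in $u$ interacts with the still–unbounded kernel $K_{m}^{-\beta}(v)$ in the other variable. It is exactly this interaction, under only the \emph{partial} bounded–variation hypothesis, that carries the weight of the argument in \cite{GoJAT}, and your paragraph names the ingredients without showing that the estimate actually closes.
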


\begin{theorem}[{Goginava} \protect\cite{GoJAT}]
Let $\alpha +\beta \geq 1,\ \alpha ,\beta >0.$ Then there exists a
continuous function $f_{0}\in PBV$ such that the Ces\`{a}ro $(C;-\alpha
,-\beta )$ means $\sigma _{n,n}^{-\alpha ,-\beta }\left( 0,0;f_{0}\,\right) $
of the doubleWalsh-Fourier series of $f_{0}$ diverges.
\end{theorem}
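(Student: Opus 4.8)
The plan is to establish divergence by the Banach--Steinhaus theorem, in the same spirit as the proof of Theorem~\ref{conv}(b), but adapted to the negative order Ces\`aro kernel $K_n^{-\alpha}$ of~(\ref{kernel}) and to the class $PBV$. The space $C(I^2)\cap PBV$, normed by $\|f\|_{PBV}:=\|f\|_C+PBV(f)$, is a Banach space, and by~(\ref{int}) the maps $f\mapsto\sigma_{n,n}^{-\alpha,-\beta}(0,0;f)$ are bounded linear functionals on it. It therefore suffices to produce continuous functions $g_N$ with $\sup_N\|g_N\|_{PBV}<\infty$ along which $|\sigma_{q_N,q_N}^{-\alpha,-\beta}(0,0;g_N)|\to\infty$, where $q_N$ is the index of~(\ref{q_n}), so that $q_N\sim 2^{2N}$.

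The decisive new idea --- and the point where the hypothesis $f\in PBV$, rather than the weighted class of Theorem~\ref{conv}(b), forces a change --- is to place the bumps on the \emph{dyadic diagonal} instead of forming a full tensor product. Let $\tau_l$ be the triangular bump of peak $1$ supported on $I_{2N}^l=[l2^{-2N},(l+1)2^{-2N})$, and set $g_N(x,y)=\sum_l\varepsilon_l\,\tau_l(x)\tau_l(y)$ with $\varepsilon_l\in\{-1,+1\}$. Because the $\tau_l$ have pairwise disjoint supports, for each fixed $y$ at most one term survives, so $g_N(\cdot,y)$ is a single scaled bump and $\mathrm{Var}_x\,g_N(\cdot,y)\le2$; the same holds in $y$. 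Hence $PBV(g_N)\le4$ and $\|g_N\|_C\le1$ \emph{irrespective of the signs} $\varepsilon_l$, in sharp contrast to a tensor construction $\varphi_N(x)\varphi_N(y)\,\mathrm{sgn}\,D_{q_N}(x)\,\mathrm{sgn}\,D_{q_N}(y)$, whose partial variation is of order $2^{2N}$. Writing $a_l=\int_{I_{2N}^l}\tau_l\,K_{q_N}^{-\alpha}$ and $b_l=\int_{I_{2N}^l}\tau_l\,K_{q_N}^{-\beta}$, the identity~(\ref{int}) at $(0,0)$ and the disjointness of supports give $\sigma_{q_N,q_N}^{-\alpha,-\beta}(0,0;g_N)=\sum_l\varepsilon_l a_l b_l$, so the choice $\varepsilon_l=\mathrm{sgn}(a_lb_l)$ collapses the mean to the positive sum $\sum_l|a_l|\,|b_l|$.

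Since $q_N<2^{2N}$, the Walsh polynomial $K_{q_N}^{-\alpha}$ is constant on each $I_{2N}^l$, so $a_l$ equals that constant value times $\int\tau_l$ and each $\varepsilon_l$ is unambiguous. The analytic heart is therefore the pointwise lower bound, analogous to~(\ref{lowest}),
\begin{equation*}
|K_{q_N}^{-\alpha}(x)|\ge \frac{c\,q_N^{\alpha}}{x^{1-\alpha}},\qquad 2^{-2N}\le x<1,
\end{equation*}
which I expect to deduce from the coefficient identity $\widehat{K_n^{-\alpha}}(j)=A_{n-1-j}^{-\alpha}/A_{n-1}^{-\alpha}$ (a consequence of~(\ref{kernel}) and~(\ref{f1})) expressing the amplification of high frequencies by the negative order. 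Since $\int\tau_l\sim2^{-2N}$ and $q_N\sim2^{2N}$, the prefactors cancel and this yields $|a_l|\gtrsim l^{-(1-\alpha)}$, $|b_l|\gtrsim l^{-(1-\beta)}$, whence
\begin{equation*}
\sigma_{q_N,q_N}^{-\alpha,-\beta}(0,0;g_N)=\sum_l|a_l|\,|b_l|\gtrsim\sum_{l=1}^{2^{2N}-1}\frac{1}{l^{\,2-\alpha-\beta}}.
\end{equation*}
The last sum stays bounded when $\alpha+\beta<1$ (matching the convergence range of the preceding theorem), grows like $\log(2^{2N})\sim N$ when $\alpha+\beta=1$, and like $(2^{2N})^{\alpha+\beta-1}$ when $\alpha+\beta>1$; in both cases it tends to infinity, which is precisely the regime $\alpha+\beta\ge1$ of the statement.

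The main obstacle is thus the magnitude estimate of the third paragraph: the constancy (hence one-signedness) of $K_{q_N}^{-\alpha}$ on level-$2N$ intervals is automatic from its degree, and only one-signed mass on each $I_{2N}^l$ is extractable by the bounded-variation bump $\tau_l$, so everything rests on establishing the stated pointwise \emph{lower} envelope rather than a mere growth of $\|K_{q_N}^{-\alpha}\|_1$. I expect this to require the explicit dyadic structure of the Walsh--Dirichlet kernels --- the block formula $D_{2^m}=2^m\mathbf 1_{[0,2^{-m})}$, the recursion~(\ref{dir}) and the representation~(\ref{w3}) --- fed into the Ces\`aro average~(\ref{kernel}), in parallel with the derivation of~(\ref{lowest}) for the pure Dirichlet kernel; this is the step I expect to be the most delicate. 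Once it is in place, $\sup_N\|g_N\|_{PBV}<\infty$ together with $|\sigma_{q_N,q_N}^{-\alpha,-\beta}(0,0;g_N)|\to\infty$ yields, via Banach--Steinhaus, a continuous $f_0\in PBV$ with $\sup_n|\sigma_{n,n}^{-\alpha,-\beta}(0,0;f_0)|=+\infty$.
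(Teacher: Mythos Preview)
This theorem is quoted from \cite{GoJAT} and is not proved in the present paper, so there is no in-paper argument to compare against directly; I can only assess your plan on its own merits and against the techniques the paper does develop for the neighbouring Theorem~\ref{Main}(b).

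Your diagonal construction $g_N(x,y)=\sum_l\varepsilon_l\,\tau_l(x)\tau_l(y)$ is the right reaction to the $PBV$ constraint, and the bound $PBV(g_N)\le4$ together with the reduction of $\sigma_{q_N,q_N}^{-\alpha,-\beta}(0,0;g_N)$ to $\sum_l|a_l|\,|b_l|$ is correct; the constancy of both kernels on level-$2N$ dyadic intervals is also fine, since they are Walsh polynomials of degree $<2^{2N}$.

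The genuine gap is the pointwise envelope $|K_{q_N}^{-\alpha}(x)|\ge c\,q_N^{\alpha}x^{-(1-\alpha)}$ on which the whole estimate rests. This is much stronger than anything the paper or \cite{Tev} provides: there one has only the \emph{integral} bound $\int_{2^{m-N-1}}^{2^{m-N}}|K_{2^N}^{-\alpha}|\ge c(\alpha)2^{m\alpha}$, and that is exactly what suffices for the tensor test functions of Theorem~\ref{Main}(b), because the double integral factors as $\big(\int\varphi_N|K^{-\alpha}|\big)\big(\int\varphi_N|K^{-\beta}|\big)$. Your diagonal sum, by contrast, is $\sum_l|a_l|\,|b_l|\asymp 2^{-2N}\int_I|K^{-\alpha}(x)|\,|K^{-\beta}(x)|\,dx$, which does \emph{not} factor; separate $L^1$ control on the two kernels gives no lower bound on the product integral, so you genuinely need correlated, essentially pointwise, information on where both kernels are simultaneously large. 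Your analogy with~(\ref{lowest}) is suggestive but misleading: that inequality is tied to the specific binary pattern of $q_N$ in the representation~(\ref{w3}) of $D_n$, and there is no reason $q_N$ should be special for $K_n^{-\alpha}$, whose summation weights $A_{n-k}^{-\alpha-1}$ change sign (positive at $k=n$, negative for $k<n$) and produce cancellation. A uniform pointwise inequality of the form you state is therefore unlikely to hold on \emph{every} level-$2N$ interval; at best one could hope to isolate a positive-density subfamily of indices $l$ on which both $|K^{-\alpha}|$ and $|K^{-\beta}|$ are large, and that remains to be proved. As written, this step is a real hole rather than a routine verification, and without it the divergence $\sum_l|a_l|\,|b_l|\to\infty$ is unsupported.
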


\begin{theorem}[Goginava \cite{GoUMJ}]
\label{(c,a,b)-conv}Let $f\in C\left( \left\{ i^{1-\alpha }\right\} ,\left\{
i^{1-\beta }\right\} \right) V\left( I^{2}\right) ,\alpha ,\beta \in \left(
0,1\right) $. Then $\left( C,-\alpha ,-\beta \right) $- means of double
Walsh-Fourier series converges to $f\left( x,y\right) $, if $f$ is
continuous at $\left( x,y\right) $.
\end{theorem}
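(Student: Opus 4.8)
The plan is to work from the integral representation (\ref{int}). Since $\int_I D_k=1$ for every $k\ge 1$ and, by (\ref{f1}), $\sum_{k=1}^n A_{n-k}^{-\alpha-1}=A_{n-1}^{-\alpha}$, the kernel (\ref{kernel}) is normalized: $\int_I K_n^{-\alpha}=1$. Hence, at a point $(x,y)$ where $f$ is continuous in the dyadic sense (\ref{cont}),
\[
\sigma_{n,m}^{-\alpha,-\beta}(x,y;f)-f(x,y)=\iint_{I^2}\bigl[f(x\dotplus s,y\dotplus t)-f(x,y)\bigr]K_n^{-\alpha}(s)K_m^{-\beta}(t)\,ds\,dt,
\]
and the task reduces to showing this tends to $0$ as $n,m\to\infty$. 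Setting $2^A\le n<2^{A+1}$ and $2^B\le m<2^{B+1}$, I would split $I^2$ into the four dyadic regions $I_A\times I_B$, $I_A\times\overline{I}_B$, $\overline{I}_A\times I_B$, $\overline{I}_A\times\overline{I}_B$ and treat each in turn (two of them being symmetric).

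The technical core is a family of estimates for the negative-order kernel $K_n^{-\alpha}$, which, unlike the Fej\'er kernel, changes sign (the coefficients $A_{n-k}^{-\alpha-1}$ in (\ref{kernel}) are negative for $n-k\ge1$). Two structural facts drive the argument. First, $A_{n-1}^{-\alpha}\sim n^{-\alpha}$ and $\sum_j|A_j^{-\alpha-1}|<\infty$ by (\ref{f2}); combined with the support of $D_{2^j}$ these give the ring estimate $\int_{I_\nu\setminus I_{\nu+1}}|K_n^{-\alpha}|\le c\,(2^\nu/n)^{\alpha}$ for $\nu<A$, whose geometric decay in $A-\nu$ keeps $\int_I|K_n^{-\alpha}|$ bounded and concentrates the mass near $0$. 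Second, by (\ref{dir})--(\ref{w3}) every $D_k$ with $k<2^{A+1}$ is constant on each dyadic interval of length $2^{-A-1}$, so $K_n^{-\alpha}$ is itself constant on each such cell. This makes the Abel (summation-by-parts) transform over a ring exact, and the decisive point is that the rate $(2^\nu/n)^{\alpha}$ is balanced against the variation weight $\lambda_i^1=i^{1-\alpha}$ --- this matching of the Ces\`aro order $-\alpha$ with the weight $i^{1-\alpha}$ is exactly why the hypothesis is framed for the class $C(\{i^{1-\alpha}\},\{j^{1-\beta}\})V$.

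With these tools the regions are handled as follows. On $I_A\times I_B$ (both variables small) the integrand is uniformly small by continuity (\ref{cont}), while $\int_I|K_n^{-\alpha}|\cdot\int_I|K_m^{-\beta}|\le c$. On the mixed region $I_A\times\overline{I}_B$ I would freeze the small variable $s$, replace $f(x\dotplus s,y\dotplus t)$ by $f(x,y\dotplus t)$ up to a continuity error, and then run a one-dimensional Abel summation in $t$: the cell-constancy of $K_m^{-\beta}$ turns the $t$-integral into a sum paired against the increments of $f(x,\cdot)$, controlled by $\Lambda^2V_2$ with weights $j^{1-\beta}$; the region $\overline{I}_A\times I_B$ is symmetric. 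The region $\overline{I}_A\times\overline{I}_B$ is the heart of the matter: here I would carry out a two-dimensional summation by parts over the dyadic grid in both variables, so that the mixed second differences $f(\Delta_i,J_j)$ appear and are controlled by $(\Lambda^1\Lambda^2)V_{1,2}$, the two kernel rates $(2^\nu/n)^\alpha$, $(2^\mu/m)^\beta$ pairing respectively against the two weight scales $i^{1-\alpha}$, $j^{1-\beta}$.

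The main obstacle is precisely this double summation by parts against a sign-changing kernel: one must organize the dyadic cells of both variables simultaneously, control the iterated cumulative kernel sums, and keep the two weight scales exactly matched to the two decay rates, since any mismatch in the exponents destroys summability. Finally, the passage from mere boundedness of these sums to the value $0$ is supplied by the \emph{continuity in variation} hypothesis $f\in C(\{i^{1-\alpha}\},\{j^{1-\beta}\})V$: the conditions $\Lambda_n^1V_1\to0$, $\Lambda_n^2V_2\to0$, $(\Lambda_n^1,\Lambda^2)V_{1,2}\to0$ and $(\Lambda^1,\Lambda_n^2)V_{1,2}\to0$ ensure that the tails of all the above sums --- those living at dyadic scales comparable to $1/n$ and $1/m$, i.e.\ the rings nearest the origin where the kernel mass is concentrated --- vanish as $n,m\to\infty$ rather than merely staying bounded, which yields convergence to $f(x,y)$.
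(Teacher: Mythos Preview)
This theorem is not proved in the present paper: it is quoted, with attribution, from Goginava \cite{GoUMJ} and then invoked as a black box (together with Corollary~\ref{embedding3}) in the one-line proof of Theorem~\ref{Main}(a). There is therefore no in-paper argument to compare your proposal against.

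For what it is worth, your outline is the standard route for results of this type and is consistent with the approach in \cite{GoUMJ} and in the analogous trigonometric literature: integral representation via (\ref{int}), a four-region dyadic split of $I^2$, ring estimates for the negative-order Walsh--Ces\`aro kernel, Abel summation by parts against the first and mixed differences of $f$, and the final appeal to continuity in $(\{i^{1-\alpha}\},\{j^{1-\beta}\})$-variation to upgrade bounded sums to $o(1)$. Your observation that the kernel decay rate $(2^\nu/n)^\alpha$ is exactly matched to the weight $i^{1-\alpha}$ is the mechanism that makes the class optimal. As written, though, what you have is a plan rather than a proof: the ring estimate for $|K_n^{-\alpha}|$ and the control of the iterated kernel sums arising from the double Abel transform are asserted, not derived, and those are the places where the work lies. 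If the intent is merely to use the result, citing \cite{GoUMJ} as the paper does is sufficient; if you mean to give a self-contained argument, those two lemmas must be stated and proved explicitly.
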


\begin{theorem}[Goginava \cite{GoUMJ}]
Let $\alpha ,\beta \in \left( 0,1\right) ,\,\alpha +\beta <1$. \newline
a)If $f\in P\left\{ \frac{n^{1-\left( \alpha +\beta \right) }}{\log
^{1+\varepsilon }\left( n+1\right) }\right\} BV(I^{2})$ for some $%
\varepsilon >0$, then the double Walsh-Fourier series of the function $f$ is
$\left( C;-\alpha ,-\beta \right) $ summable to $f\left( x,y\right) $, if $f$
is continuous at $\left( x,y\right) $.\newline
b) There exists a continuous function $f\in P\left\{ \frac{n^{1-\left(
\alpha +\beta \right) }}{\log \left( n+1\right) }\right\} BV(I^{2})$ such
that $\sigma _{2^{n},2^{n}}^{-\alpha ,-\beta }\left( 0,0;f\right) $ diverges.
\end{theorem}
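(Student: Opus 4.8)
The plan is to handle the two parts through the integral representation (\ref{int}) and (\ref{kernel}) with $\alpha,\beta$ replaced by $-\alpha,-\beta$. For part (a), since $\int_I K_n^{-\alpha}=1$, I would start from
\begin{equation*}
\sigma_{n,m}^{-\alpha,-\beta}(x,y;f)-f(x,y)=\iint_{I^{2}}\bigl[f(s,t)-f(x,y)\bigr]\,K_n^{-\alpha}(x\dotplus s)\,K_m^{-\beta}(y\dotplus t)\,ds\,dt .
\end{equation*}
The whole argument rests on pointwise and summatory estimates for the negative-order Walsh--Ces\`aro kernels on the dyadic rings $I_k\setminus I_{k+1}$: these kernels are \emph{not} uniformly integrable, their $L^{1}$-norms growing like $n^{\alpha}$ and $m^{\beta}$, so the generalized variation of $f$ must be summed against them to compensate. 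On the diagonal dyadic square $I_p(x)\times I_p(y)$ (for $p$ large) continuity of $f$ at $(x,y)$ makes $f(s,t)-f(x,y)$ uniformly small, while the kernels there carry essentially bounded mass; this disposes of the main diagonal contribution.

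The natural reduction is the splitting $f(s,t)-f(x,y)=\bigl[f(s,t)-f(x,t)\bigr]+\bigl[f(x,t)-f(x,y)\bigr]$, so that each bracket is a difference in one variable, governed by a single partial variation. The second bracket, after integrating out $s$, collapses to the one-dimensional $(C,-\beta)$-error of $f(x,\cdot)$; since $1-(\alpha+\beta)<1-\beta$ gives the inclusion $\{n^{1-(\alpha+\beta)}/\log^{1+\varepsilon}\}BV\subset\{n^{1-\beta}/\log^{1+\varepsilon}\}BV$ in the second variable, the one-dimensional theory (\cite{Tev,GoJAT2}) yields convergence to $f(x,y)$. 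The first bracket is the genuinely hard term: integrating $[f(s,t)-f(x,t)]$ against \emph{both} kernels requires control of a first-variable object as $t$ varies, i.e.\ essentially a mixed difference, which the class $P\Lambda BV$ does not provide. I would resolve this through the product structure, summing by parts in $s$ against the kernel sums of $K_n^{-\alpha}$ (using $\Lambda V_1$, uniformly in the parameter $t$) and then, block by block, integrating the resulting first-variable increments against $K_m^{-\beta}$ and summing by parts in $t$ (using $\Lambda V_2$), arranging the ring estimates so that the non-integrable singular parts of $K_n^{-\alpha}$ and $K_m^{-\beta}$ overlap only on the near-diagonal, where smallness of $f$ again rescues the estimate. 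It is exactly this simultaneous presence of both kernel singularities on the diagonal blocks that produces the effective weight $n^{1-(\alpha+\beta)}$, and the summation by parts leaves a borderline series of type $\sum_i \log(i+1)/(i\,\lambda_i)$ whose convergence needs the surplus $\log^{\varepsilon}$. I expect this coupling---estimating a first-variable Ces\`aro error against the non-integrable second-variable kernel with no access to the mixed (Hardy) variation---to be the main obstacle.

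For part (b) I would follow the Banach--Steinhaus scheme already used in the proof of Theorem \ref{conv}(b). With the triangular bumps $\varphi_{N,j}$ and $\varphi_N$ of (\ref{phi_N}), set
\begin{equation*}
g_N(x,y)=\varphi_N(x)\,\varphi_N(y)\,\text{sgn}\,K_{q_N}^{-\alpha}(x)\,\text{sgn}\,K_{q_N}^{-\beta}(y),
\end{equation*}
with $q_N$ as in (\ref{q_n}), so that the corresponding Ces\`aro mean factors as the square of the one-dimensional integral $\int_I\varphi_N(x)\,|K_{q_N}^{-\alpha}(x)|\,dx$. A lower bound for $|K_{q_N}^{-\alpha}|$ on the dyadic rings---the negative-order counterpart of (\ref{lowest})---forces this integral, and hence the means along the subsequence tied to $2^{n}$, to grow. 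The calibration step is to check that these $g_N$ sit in the borderline class $P\{n^{1-(\alpha+\beta)}/\log(n+1)\}BV$ with slowly growing norm: the partial variations $\Lambda V_1(g_N)$, $\Lambda V_2(g_N)$ reduce to a series in $\log$ and the weight that is just summable, after which renormalizing $G_N=g_N/\|g_N\|$ (with $\|\cdot\|$ the norm of $C(I^{2})\cap P\Lambda BV$) and invoking Banach--Steinhaus produces a single continuous $f$ in the class with $\sigma_{2^{n},2^{n}}^{-\alpha,-\beta}(0,0;f)$ unbounded. Here the difficulty is the sharp calibration at the $\log$ (rather than $\log^{1+\varepsilon}$) threshold, which demands two-sided control of both the Ces\`aro kernel lower bound and the partial $\Lambda$-variation of the construction.
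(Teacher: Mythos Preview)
This theorem is not proved in the present paper: it is quoted from \cite{GoUMJ} as a known result, so there is no proof here against which to compare your proposal.

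For what it is worth, your plan for part~(b) is essentially the scheme the paper uses to prove the closely analogous Theorem~\ref{Main}(b): the triangular bumps $\varphi_N$ of (\ref{phi_N}), a test function of the form $\varphi_N(x)\varphi_N(y)\,\mathrm{sgn}\,K^{-\alpha}\,\mathrm{sgn}\,K^{-\beta}$, the lower bound $\int_{2^{m-N-1}}^{2^{m-N}}|K_{2^N}^{-\alpha}|\geq c(\alpha)2^{m\alpha}$ from \cite{Tev}, norm calibration, and Banach--Steinhaus. One correction: since the statement asserts divergence of $\sigma_{2^n,2^n}^{-\alpha,-\beta}$, you should evaluate the kernel at a dyadic index $2^{2N}$ (as the paper does for Theorem~\ref{Main}) rather than at $q_N$, which is not a power of two and would yield divergence along the wrong subsequence. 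For part~(a) your outline is reasonable in shape, and you correctly isolate the crux---controlling the first-variable increment against the non-integrable second-variable kernel with only partial (not mixed) variation available---but the paper provides no information on how \cite{GoUMJ} actually resolves this, so the comparison cannot be made here.
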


In this paper we prove that the following are true.

\begin{theorem}
\label{Main}a) Let $\alpha ,\beta \in \left( 0,1\right) ,\,\alpha +\beta <1$
and $f\in \left\{ n^{1-\left( \alpha +\beta \right) }\right\} ^{\#}BV$.
Then $\sigma _{n,m}^{-\alpha ,-\beta }\left( x,y;f\right) $ converges to $%
f\left( x,y\right) $, if $f$ is continuous at $\left( x,y\right) $. \newline
b) Let $\Lambda :=\left\{ n^{1-\left( \alpha +\beta \right) }\xi
_{n}\right\} $, where $\xi _{n}\uparrow \infty $ as $n\rightarrow \infty $.
Then \ there exists a function $f\in C\left( I^{2}\right) \cap C\Lambda
^{\#}V$ for which $\left( C;-\alpha ,-\beta \right) $-means of double
Walsh-Fourier series diverge unboundedly at $\left( 0,0\right) $.
\end{theorem}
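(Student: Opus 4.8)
This part requires no new estimate, only a chain of embeddings already assembled in the excerpt. Since $\alpha,\beta\in(0,1)$ with $\alpha+\beta<1$, the exponent $\gamma:=1-(\alpha+\beta)$ lies in $(0,1)$, and the hypothesis $f\in\{n^{\gamma}\}^{\#}BV$ is exactly the assertion $f\in\{n^{\gamma}\}^{\#}V$. By Corollary \ref{embedding3} this gives $f\in C\{i^{1-\alpha}\}\{j^{1-\beta}\}V(I^{2})$, and Theorem \ref{(c,a,b)-conv} then yields $\sigma_{n,m}^{-\alpha,-\beta}(x,y;f)\to f(x,y)$ at every point of continuity of $f$. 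So I would simply compose these two cited facts.

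\textbf{Part (b), set-up.} For the divergence statement I would reproduce the Banach--Steinhaus scheme used for Theorem \ref{conv}(b), replacing the Walsh--Dirichlet kernel by the negative-order Ces\`aro kernel of (\ref{kernel}). First, $\Lambda=\{\lambda_{n}\}=\{n^{1-(\alpha+\beta)}\xi_{n}\}$ satisfies $\lambda_{2n}/\lambda_{n}=2^{1-(\alpha+\beta)}\,\xi_{2n}/\xi_{n}\ge 2^{1-(\alpha+\beta)}>1$, so (\ref{maincond}) holds and Theorem \ref{continous} gives $\Lambda^{\#}V=C\Lambda^{\#}V$ (if $\sum 1/\lambda_{n}<\infty$ this identity is trivial, as then every bounded function lies in $C\Lambda^{\#}V$). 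Hence it suffices to produce $f\in C(I^{2})\cap\Lambda^{\#}BV$ with $\sup_{n}|\sigma_{n,n}^{-\alpha,-\beta}(0,0;f)|=\infty$. On the Banach space $C(I^{2})\cap\Lambda^{\#}BV$ with norm $\|f\|_{C}+\Lambda^{\#}V(f)$, the maps $f\mapsto\sigma_{n,n}^{-\alpha,-\beta}(0,0;f)$ are bounded linear functionals, so by the uniform boundedness principle it is enough to find test functions along which their norms blow up.

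\textbf{Part (b), test functions.} I would take $g_{N}(x,y)=\varphi_{N}(x)\varphi_{N}(y)\,\text{sgn}K_{n_{N}}^{-\alpha}(x)\,\text{sgn}K_{n_{N}}^{-\beta}(y)$, with $\varphi_{N}$ the bump function from (\ref{phi_N}) and $n_{N}\asymp 2^{2N}$ (e.g.\ $n_{N}=2^{2N}$) an index fixed below. Since every $D_{k}$ with $k\le 2^{2N}$, and hence $K_{n_{N}}^{-\alpha}$ and its sign, is constant on each dyadic interval $[j2^{-2N},(j+1)2^{-2N})$, the function $\varphi_{N}(x)\,\text{sgn}K_{n_{N}}^{-\alpha}(x)$ is piecewise linear with $O(2^{2N})$ monotone pieces; sorting its jumps against the increasing weights $\lambda_{i}$ gives $\Lambda^{\#}V_{s}(g_{N})\le c\sum_{i=1}^{2^{2N}}\frac{1}{i^{1-(\alpha+\beta)}\xi_{i}}=o\big(2^{2N(\alpha+\beta)}\big)$ for $s=1,2$, the last step using $\xi_{i}\uparrow\infty$. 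With $\|g_{N}\|_{C}\le1$ this yields $\|g_{N}\|_{\Lambda^{\#}BV}=\eta_{N}2^{2N(\alpha+\beta)}$, $\eta_{N}\to0$. On the other hand the sign factors make $g_{N}(x,y)K_{n_{N}}^{-\alpha}(x)K_{n_{N}}^{-\beta}(y)\ge0$, so by the product structure of (\ref{int}), $\sigma_{n_{N},n_{N}}^{-\alpha,-\beta}(0,0;g_{N})=\big(\int_{I}\varphi_{N}|K_{n_{N}}^{-\alpha}|\big)\big(\int_{I}\varphi_{N}|K_{n_{N}}^{-\beta}|\big)$.

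\textbf{The main obstacle and conclusion.} The decisive step, and the one I expect to be hardest, is the lower bound $\int_{I}\varphi_{N}(x)\,|K_{n_{N}}^{-\alpha}(x)|\,dx\ge c\,2^{2N\alpha}$ (and its $\beta$-analogue), which plays the role (\ref{lowest}) played in the Dirichlet case. Here the choice of $n_{N}$ is delicate, since one must avoid the kernel being concentrated on $[0,2^{-2N})$ — the Ces\`aro counterpart of replacing $D_{2^{n}}$ by $D_{q_{n}}$ — in order to have a pointwise estimate of the form $|K_{n_{N}}^{-\alpha}(x)|\gtrsim 2^{2N\alpha}\,2^{k(1-\alpha)}$ on $[2^{-k-1},2^{-k})$, $0\le k\le 2N$; integrating against $\varphi_{N}$, which has positive average on each such dyadic scale, then recovers a fixed fraction of $\|K_{n_{N}}^{-\alpha}\|_{1}\asymp 2^{2N\alpha}$. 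Such kernel bounds are exactly what drives the one-dimensional negative-order theory and the construction behind Goginava \cite{GoJAT,GoUMJ}, and I would adapt them here. Granting this, $|\sigma_{n_{N},n_{N}}^{-\alpha,-\beta}(0,0;g_{N})|\ge c\,2^{2N(\alpha+\beta)}$, so the normalized functions $G_{N}:=g_{N}/\|g_{N}\|_{\Lambda^{\#}BV}$ satisfy $\|G_{N}\|_{\Lambda^{\#}BV}=1$ and $|\sigma_{n_{N},n_{N}}^{-\alpha,-\beta}(0,0;G_{N})|\ge c/\eta_{N}\to\infty$. Banach--Steinhaus then furnishes the desired $f\in C(I^{2})\cap C\Lambda^{\#}V$ whose Ces\`aro means diverge unboundedly at the origin.
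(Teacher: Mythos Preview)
Your proposal is correct and follows essentially the same architecture as the paper: part (a) via Corollary \ref{embedding3} and Theorem \ref{(c,a,b)-conv}, part (b) via test functions $\varphi_{N}(x)\varphi_{N}(y)\,\text{sgn}K^{-\alpha}\,\text{sgn}K^{-\beta}$ and Banach--Steinhaus.

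Two minor points of comparison. First, your reduction from $C\Lambda^{\#}V$ to $\Lambda^{\#}BV$ via Theorem \ref{continous} is in fact cleaner than the paper's, which instead observes $\{n^{1-(\alpha+\beta)}\sqrt{\xi_{n}}\}^{\#}BV\subset C\{n^{1-(\alpha+\beta)}\xi_{n}\}^{\#}V$ and then replaces $\xi_{n}$ by $\sqrt{\xi_{n}}$. Second, your hedging about the ``delicate'' choice of $n_{N}$ is unnecessary: unlike $D_{2^{n}}$, the negative-order kernel $K_{2^{2N}}^{-\alpha}$ is \emph{not} concentrated near the origin, and the paper simply takes $n_{N}=2^{2N}$ and invokes the ready-made dyadic estimate $\int_{2^{m-N-1}}^{2^{m-N}}|K_{2^{N}}^{-\alpha}(x)|\,dx\ge c(\alpha)2^{m\alpha}$ from Tevzadze \cite{Tev}, which after summing over $m=0,\ldots,2N-1$ gives exactly your target $\int_{I}\varphi_{N}|K_{2^{2N}}^{-\alpha}|\ge c(\alpha)2^{2N\alpha}$. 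So no analogue of $q_{n}$ is needed here.
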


\begin{proof}
Part a) immediately follows from, Corollary \ref{contin} and Theorem \ref%
{(c,a,b)-conv} .

To prove part b) observe that
\begin{equation*}
\left\{ n^{1-\left( \alpha +\beta \right) }\sqrt{\xi _{n}}\right\}
^{\#}BV\subset C\left\{ n^{1-\left( \alpha +\beta \right) }\xi _{n}\right\}
^{\#}V,
\end{equation*}%
and since $\xi _{n}\uparrow \infty $ is arbitrary, it is enough to show that there exists a continuous function $f\in \Lambda ^{\#}BV$ for which $\left(
C;-\alpha ,-\beta \right) $-means of double Walsh-Fourier series diverges
unboundedly at $\left( 0,0\right) $.

Denote
\begin{equation*}
h_{N}\left( x,y\right) :=\varphi _{N}\left( x\right) \varphi _{N}\left(
y\right) \text{sgn}K_{2^{2N}}^{-\alpha }\left( x\right) \text{sgn}%
K_{2^{2N}}^{-\beta }\left( y\right), 
\end{equation*}
where $\varphi_N$ is defined in (\ref{phi_N}), and the kernel $K_n^\alpha$ is  defined in (\ref {kernel}). 
It is easy to show that for $s=1,2$, 
\begin{eqnarray*}
\left\{ n^{1-\left( \alpha +\beta \right) }{\xi _{n}}\right\}
^{\#}V_{s}\left( h_{N}\right)  
&\leq &c\left( \alpha ,\beta \right) \sum\limits_{i=1}^{2^{2N}-1}\frac{1}{%
i^{1-\left( \alpha +\beta \right) }{\xi _{i}}} \\
&=&o\left( 2^{2N\left( \alpha +\beta \right) }\right), \text{ as \ }%
N\rightarrow \infty ,
\end{eqnarray*}%
hence 
\begin{equation*}
\left\Vert h_{N}\right\Vert _{\Lambda ^{\#}BV}=o\left( 2^{2N\left( \alpha
+\beta \right) }\right) =:\eta _{N}2^{2N\left( \alpha +\beta \right) },
\end{equation*}%
where $\eta _{N}=o\left( 1\right) $ as \ $N\rightarrow \infty $. Consequently, denoting
\begin{equation*}
H_{N}\left( x,y\right) :=\frac{h_{N}\left( x,y\right) }{\eta _{N}2^{2N\left(
\alpha +\beta \right) }},
\end{equation*}%
we conclude that $H_{N}\in C(I^2)\cap\Lambda ^{\#}BV$ and 
\begin{equation}\label{bound}
\sup\limits_{N}\left\Vert H_{N}\right\Vert _{\Lambda ^{\#}BV}<\infty .
\end{equation}%
By construction of the function $H_N$, we have
\begin{eqnarray}
&&\hskip-15mm \sigma _{2^{2N},2^{2N}}^{-\alpha ,-\beta }\left( 0,0;H_{N}\right)
\label{(c,a,b)}=
\iint\limits_{I^{2}}H_{N}\left( x,y\right) K_{2^{2N}}^{-\alpha }\left(
x\right) K_{2^{2N}}^{-\beta }\left( y\right) dxdy\nonumber\\
&=&
\frac{1}{\eta _{N}2^{2N\left( \alpha +\beta \right) }}\iint%
\limits_{I^{2}}h_{N}\left( x,y\right) K_{2^{2N}}^{-\alpha }\left( x\right)
K_{2^{2N}}^{-\beta }\left( y\right) dxdy\\
&=&\frac{1}{\eta _{N}2^{2N\left( \alpha +\beta \right) }}\int\limits_{I}%
\varphi _{N}\left( x\right) \left\vert K_{2^{2N}}^{-\alpha }\left( x\right)
\right\vert dx\int\limits_{I}\varphi _{N}\left( y\right) \left\vert
K_{2^{2N}}^{-\beta }\left( y\right) \right\vert dy.\nonumber
\end{eqnarray}%
Now, using the following estimate from \cite{Tev}:
\begin{equation*}
\int\limits_{2^{m-N-1}}^{2^{m-N}}\left\vert K_{2^{N}}^{-\alpha }\left(
x\right) \right\vert dx\geq c\left( \alpha \right) 2^{m\alpha },\quad N\in \mathbb{%
N},\quad m=1,...,N,\quad 0<\alpha <1,
\end{equation*}%
we can write%
\begin{equation}\label{a}
\int\limits_{I}\varphi _{N}\left( x\right) \left\vert K_{2^{2N}}^{-\alpha
}\left( x\right) \right\vert dx  
=\sum\limits_{j=1}^{2^{2N}-1}\int\limits_{j2^{-2N}}^{\left( j+1\right)
2^{-2N}}\varphi _{N,j}\left( x\right) \left\vert K_{2^{2N}}^{-\alpha }\left(
x\right) \right\vert dx
\end{equation}%
\begin{equation*}
=\sum\limits_{j=1}^{2^{2N}-1}\left\vert K_{2^{2N}}^{-\alpha }\left( \frac{j}{%
2^{2N}}\right) \right\vert \int\limits_{j2^{-2N}}^{\left( j+1\right)
2^{-2N}}\varphi _{N,j}\left( x\right) dx
\end{equation*}%
\begin{equation*}
=\frac{1}{2}\sum\limits_{j=1}^{2^{2N}-1}\left\vert K_{2^{2N}}^{-\alpha
}\left( \frac{j}{2^{2N}}\right) \right\vert \int\limits_{j2^{-2N}}^{\left(
j+1\right) 2^{-2N}}dx
\end{equation*}%
\begin{equation*}
=\frac{1}{2}\sum\limits_{j=1}^{2^{2N}-1}\int\limits_{j2^{-2N}}^{\left(
j+1\right) 2^{-2N}}\left\vert K_{2^{2N}}^{-\alpha }\left( x\right)
\right\vert dx
=\frac{1}{2}\sum\limits_{m=0}^{2N-1}\sum\limits_{j=2^{m}}^{2^{m+1}-1}\int%
\limits_{j2^{-2N}}^{\left( j+1\right) 2^{-2N}}\left\vert K_{2^{2N}}^{-\alpha
}\left( x\right) \right\vert dx
\end{equation*}%
\begin{equation*}
=\frac{1}{2}\sum\limits_{m=0}^{2N-1}\int\limits_{2^{m-2N}}^{2^{m+1-2N}}\left%
\vert K_{2^{2N}}^{-\alpha }\left( x\right) \right\vert dx
\geq c\left( \alpha \right) \sum\limits_{m=0}^{2N-1}2^{m\alpha }\geq c\left(
\alpha \right) 2^{2N\alpha }.
\end{equation*}
Analogously, we can prove that%
\begin{equation}
\int\limits_{I}\varphi _{N}\left( x\right) \left\vert K_{2^{2N}}^{-\beta
}\left( x\right) \right\vert dx\geq c\left( \beta \right) 2^{2N\beta },\quad N\in
\mathbb{N},\quad 0<\beta <1.  \label{b}
\end{equation}
Combining (\ref{a}) and (\ref{b}) we get
\begin{equation}
\left\vert \sigma _{2^{2N},2^{2N}}^{-\alpha ,-\beta }\left( 0,0;H_{N}\right)
\right\vert \geq \frac{c\left( \alpha ,\beta \right) }{\eta _{N}}\rightarrow
\infty \text{ as }N\rightarrow \infty \text{.}  \label{BS2}
\end{equation}

Applying the Banach-Steinhaus Theorem, from (\ref {bound}) and (\ref{BS2}) we obtain that there
exists a continuous function $f\in \Lambda ^{\#}BV$ such that
\begin{equation*}
\sup\limits_{N}\left\vert \sigma _{N,N}^{-\alpha ,-\beta }\left(
0,0,;f\right) \right\vert =+\infty .
\end{equation*}
Theorem \ref{Main} is proved.
\end{proof}

Since%
\begin{equation*}
\Lambda ^{\ast }BV\subset \Lambda ^{\#}BV
\end{equation*}%
from Theorem \ref{Main} we conclude that the following is true.

\begin{corollary}
Let $\alpha ,\beta \in \left( 0,1\right) ,\,\alpha +\beta <1$ and $f\in
\left\{ n^{1-\left( \alpha +\beta \right) }\right\} ^{\ast }BV$. Then $\sigma _{n,m}^{-\alpha ,-\beta }\left( x,y;f\right) $ converges to $%
f\left( x,y\right) $, if $f$ is continuous at $\left( x,y\right) $.
\end{corollary}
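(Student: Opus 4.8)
The plan is to deduce the corollary from Theorem~\ref{Main}(a) through the embedding $\Lambda^{\ast}BV\subset\Lambda^{\#}BV$, applied with $\Lambda=\{n^{1-(\alpha+\beta)}\}$. Theorem~\ref{Main}(a) already yields $(C;-\alpha,-\beta)$-convergence at every point of continuity for every function in $\{n^{1-(\alpha+\beta)}\}^{\#}BV$; hence, once the embedding is established, a function $f\in\{n^{1-(\alpha+\beta)}\}^{\ast}BV$ automatically satisfies the hypotheses of Theorem~\ref{Main}(a) and the assertion follows at once.

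The substance of the argument is therefore the embedding, which I would prove for an arbitrary $\Lambda$ obeying (\ref{Lambda}) and then specialize. Fixing a system of points $\{y_i\}$ and nonoverlapping intervals $\{I_i=(a_i,b_i)\}$ competing in the supremum that defines $\Lambda^{\#}V_1(f)$, the starting point is the elementary additive identity
\[
f(I_i,y_i)=f(I_i,0)+f(I_i,J_i),\qquad J_i:=(0,y_i),
\]
which is immediate from the definitions of $f(I_i,y)$ and of the mixed difference $f(I_i,J)$. Dividing by $\lambda_i$, summing over $i$, and applying the triangle inequality splits $\Lambda^{\#}V_1(f)$ into two pieces. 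The first piece $\sum_i|f(I_i,0)|/\lambda_i$ uses the single ordinate $y=0$ and is thus bounded by $\Lambda V_1(f)$.

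The second piece $\sum_i|f(I_i,J_i)|/\lambda_i$ is where the only real difficulty lies. The vertical intervals $J_i=(0,y_i)$ all share the endpoint $0$ and are therefore nested rather than nonoverlapping, so it is not a priori clear that this sum is governed by the Dyachenko--Waterman rectangle variation $\Lambda^{\ast}V(f)$. The key observation I would record is that the rectangles $A_i:=I_i\times J_i$ are nonetheless pairwise nonoverlapping: their horizontal projections $I_i$ have pairwise disjoint interiors, and rectangles whose horizontal projections are disjoint cannot overlap irrespective of their vertical extents. Consequently $\{A_i\}$ is an admissible collection in $\Gamma$, and since $f(A_i)=f(I_i,J_i)$ we obtain $\sum_i|f(I_i,J_i)|/\lambda_i\le\Lambda^{\ast}V(f)$.

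Combining the two estimates gives $\Lambda^{\#}V_1(f)\le\Lambda V_1(f)+\Lambda^{\ast}V(f)\le\Lambda V(f)$, and the symmetric argument, decomposing about $x=0$ in the first variable, yields $\Lambda^{\#}V_2(f)\le\Lambda V_2(f)+\Lambda^{\ast}V(f)\le\Lambda V(f)$. Adding these produces $\Lambda^{\#}V(f)\le 2\,\Lambda V(f)<\infty$, which proves $\Lambda^{\ast}BV\subset\Lambda^{\#}BV$. Specializing to $\Lambda=\{n^{1-(\alpha+\beta)}\}$ and invoking Theorem~\ref{Main}(a) then finishes the proof.
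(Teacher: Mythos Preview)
Your proposal is correct and follows exactly the paper's route: the paper deduces the corollary from Theorem~\ref{Main}(a) via the inclusion $\Lambda^{\ast}BV\subset\Lambda^{\#}BV$, which it simply asserts, whereas you supply a full (and correct) proof of that embedding. The overall strategy is identical; your version is just more explicit.
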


Theorem \ref{Main} and Theorem \ref{embedding4} imply.

\begin{theorem}
Let $\alpha ,\beta \in \left( 0,1\right) ,\,\alpha +\beta <1$ and%
\begin{equation*}
\sum\limits_{j=1}^{\infty }\frac{v_{s}^{\#}\left( f;2^{j}\right) }{%
2^{j\left( 1-\left( \alpha +\beta \right) \right) }}<\infty ,\quad s=1,2.
\end{equation*}%
Then $\sigma _{n,m}^{-\alpha ,-\beta }\left( x,y;f\right) $ converges to $%
f\left( x,y\right) $, if $f$ is continuous at $\left( x,y\right) $.
\end{theorem}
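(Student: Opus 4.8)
The plan is to obtain this theorem as an immediate consequence of the two results to which it is attributed, Theorems~\ref{embedding4} and~\ref{Main}. The argument is a short chain of inclusions, so I do not expect any genuine obstacle; the whole content is already packaged in the cited theorems.

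First I would feed the hypothesis directly into Theorem~\ref{embedding4}. Since $\alpha,\beta\in(0,1)$, $\alpha+\beta<1$, and $\sum_{j=1}^{\infty} v_s^{\#}(f;2^j)/2^{j(1-(\alpha+\beta))}<\infty$ for $s=1,2$, that theorem yields at once that $f\in C\{n^{1-(\alpha+\beta)}\}^{\#}V$.

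Next I would pass from continuity in $\Lambda^{\#}$-variation to bounded $\Lambda^{\#}$-variation, i.e. use the inclusion $C\Lambda^{\#}V\subset\Lambda^{\#}BV$ for $\Lambda=\{n^{1-(\alpha+\beta)}\}$. This is immediate from Definition~\ref{d5}: the tail variations $\Lambda_n^{\#}V(f)$ are nonincreasing in $n$, because increasing $n$ replaces the weight sequence $\{\lambda_n,\lambda_{n+1},\dots\}$ by a pointwise larger one and hence decreases each summand, while $\Lambda_1^{\#}V(f)=\Lambda^{\#}V(f)$. Therefore the defining requirement $\lim_{n\to\infty}\Lambda_n^{\#}V(f)=0$ forces $\Lambda^{\#}V(f)<\infty$, that is, $f\in\{n^{1-(\alpha+\beta)}\}^{\#}BV$.

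Finally I would apply Theorem~\ref{Main}(a) with this same $\Lambda$: for $\alpha,\beta\in(0,1)$, $\alpha+\beta<1$, and $f\in\{n^{1-(\alpha+\beta)}\}^{\#}BV$ it states precisely that $\sigma_{n,m}^{-\alpha,-\beta}(x,y;f)\to f(x,y)$ at every point where $f$ is continuous, which is the assertion to be proved. The only step warranting a remark is the middle inclusion, and that follows at once from the monotonicity of the tail variations noted above (for the functions under consideration finiteness of a single $\Lambda_n^{\#}V(f)$ already propagates back to $\Lambda_1^{\#}V(f)$ at the cost of one bounded term); everything else is a verbatim application of the cited theorems.
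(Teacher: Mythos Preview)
Your proposal is correct and follows exactly the route the paper indicates: the theorem is stated there as an immediate consequence of Theorem~\ref{embedding4} and Theorem~\ref{Main}, and you have simply made the implicit middle step $C\{n^{1-(\alpha+\beta)}\}^{\#}V\subset\{n^{1-(\alpha+\beta)}\}^{\#}BV$ explicit. Note that this inclusion is in fact an equality by Corollary~1.1 (a consequence of Theorem~\ref{continous}), so you could cite that instead of the monotonicity argument; either way the logic is the same as the paper's.
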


\end{document}